\theoremstyle{plain}
\newtheorem{thm}{Theorem}[section]
\theoremstyle{plain}
\newtheorem{lem}[thm]{Lemma}
\newtheorem{prop}[thm]{Proposition}
\theoremstyle{definition}
\newtheorem{defi}{Definition}[section]
\newtheorem{rem}{Remark}
\newcommand{\De} {\Delta}
\newcommand{\la} {\lambda}
\newcommand{\rn}{\mathbb{R}^{N}}
\numberwithin{equation}{section} \allowdisplaybreaks
     \title[]{ Sign changing solutions of the Hardy-Sobolev-Maz'ya equation}
\date{}
 \author[Debdip Ganguly]{Debdip Ganguly}
 \address[Debdip Ganguly]{\newline
  Centre for Applicable Mathematics,
  Tata Instiute of Fundamental Research,
   P.O.\ Box 6503, GKVK Post Office,
   Bangalore 560065, India}
 \email[]{debdip@math.tifrbng.res.in}
\begin{document}

\begin{abstract}
 In this article we will study the existence, multiplicity and Morse index of sign changing
 solutions for the Hardy-Sobolev-Maz'ya(HSM) 
 equation in bounded domain and  involving critical growth. We obtain infinitely many sign changing solutions for HSM equation. 
We also establish an estimate on the Morse index for the sign changing
solutions.
\end{abstract}

 \maketitle
{\bf Keywords:} Hardy-Sobolev-Maz'ya equation; sign changing solutions; Morse index.

\section{Introduction}
 In this article we will study the equation

\begin{equation}\label{E:1.1}
\left.
 \begin{array}{rlll}
  -\De  u-  \dfrac{\la u}{|y|^{2}} & = & \dfrac{|u|^{2^*(t)-2}u}{|y|^{t}} + \mu u &{\rm in} \; \Omega, \\
u & = &  0 &{\rm on} \; \partial \Omega,
 \end{array} \right\}
\end{equation}

 where $\Omega$ denotes a  bounded domain in $\rn
  \equiv \mathbb{R}^{k} \times \mathbb{R}^{N-k}$, $2 < k  < N$, $\mu >0$, $0\leq \la < \frac{(k-2)^{2}}{4},$
   $ 0 \leq t <2$ and  $2^{*}(t) = \frac{2(N-t)}{N-2}.$
 A point $x \in \rn$ is 
denoted as $x = (y,z) \in  \mathbb{R}^{k} \times \mathbb{R}^{N-k}$ and $\Omega$ contains some points $x^{0} = (0, z^0)$.  \\
 By a weak solution of the above problem we mean $u \in H^{1}_{0}(\Omega)$ satisfying 
\begin{equation}\label{WK}
\int_{\Omega} \left(\nabla u \nabla v - \la \frac{uv}{|y|^{2}} \right) dx = \int_{\Omega} \frac{|u|^{2^*(t)-2}uv}{|y|^{t}} dx 
+ \mu \int_{\Omega} uv \ dx,   \; \; \forall v\in H^{1}_{0}(\Omega). 
\end{equation}

  Eq. \eqref{E:1.1} has been used to model several astrophysical phenomenon like
stellar dynamics (see \cite{BFH}, \cite{BT}). Also, from the mathematical point of view, Eq. 
\eqref{E:1.1} with $\Omega= \rn$ has generated lot of interest  due to  its connection with  
the Brezis-Nirenberg problem in the Hyperbolic space
(see \cite{SMO}, \cite{HS}, \cite{MS}, \cite{SMF} ).\\

In recent years, much attention has been given to the existence of nontrivial solutions
 for the problem \eqref{E:1.1}. In a bounded domain, the problem \eqref{E:1.1} does not have a solution in general 
 due to the critical nature of the equation.
 For the case $\mu = 0$ and $2 \leq k < N$,  Bhakta and Sandeep in \cite{SB}, proved nonexistence of nontrivial solutions for the 
Eq. \eqref{E:1.1}, when $\Omega$ is star shaped with respect to the point $(0,z_{0}),$ 
using Pohozaev identity. They also discussed
existence in some special bounded domain. Jannelli in \cite{J}, has considered the problem \eqref{E:1.1} 
with $t = 0$ and $k = N$ and 
proved the existence of positive solution under some conditions on $\la$ and $\mu$. In \cite{CH}, Cao and Han 
established that  the Eq. \eqref{E:1.1}
 with $t = 0$ and $k = N $ admits a nontrivial solution for all  $\mu > 0$ if
 $\la \in [0, (\frac{(N-2)}{4})^2 - (\frac{N+2}{N})^2 ) .$\\

 When $\Omega = \rn$, the existence of positive solution for \eqref{E:1.1} has been studied in (\cite{RM}) and (\cite{TT}).
 Moreover, the  qualitative properties like cylindrical symmetry, regularity, decay 
properties and uniqueness of the positive solution of  Eq. \eqref{E:1.1} are thoroughly discussed 
 in (\cite {MG}) and (\cite{SMF}). Also when $\Omega = \rn$, the hyperbolic symmetry
of the equation  (see  \cite{HS}, \cite{MS}, \cite{SMF}) plays a crucial role in the study of non degeneracy of positive solutions.\\

 The Eq. $\eqref{E:1.1}$ with $\la = 0$, $t = 0$ is the well known Brezis-Nirenberg problem(\cite{BN}) 
 and is well studied(see \cite{GS} \cite{SD}, \cite{SZ} and references therein). 
 Cao and Yan in \cite{CY} considered problem \eqref{E:1.1} with $t = 0,$  $k = N$. They proved the existence 
of infinitely many solutions for any $\mu > 0$ if $\la \in [0 , (\frac{N-2}{2})^2 - 4)$ 
 and later Wang and Wang in \cite{WW} obtained the same result for \eqref{E:1.1} if  
$\la \in [0 , (\frac{k-2}{2})^2 - 4)$. All these results uses the compactness of
 the solutions of the Brezis-Nirenberg problem established
by Solimini and Devillanova \cite{SD} for $N \geq 7$. But \cite{CY} and  \cite{WW} do not have any information 
about the existence and multiplicity of sign changing solutions. It is also worth mentioning that, one cannot
obtain the existence and multiplicity of sign changing solutions of  problem \eqref{E:1.1}, by adopting the 
methods introduced in \cite{WW}.\\

So the question of existence of infinitely many sign changing solutions 
for the Eq. \eqref{E:1.1} remains open. An important result attributed to Schechter and Zou(see \cite{SZ}) asserts that there 
exists infinitely many sign changing solutions to the Brezis-Nirenberg problem in higher dimensions. Also Ganguly and Sandeep
 in \cite{DS}, proved the existence of infinitely 
many sign changing solutions for the Brezis-Nirenberg problem in the hyperbolic space. \\

In the literature, the only paper which deals with the existence of sign changing solutions for the Eq. \eqref{E:1.1} 
with $t = 0$ and $k =N$ 
is \cite{CP1}, where Cao and Peng obtained a pair of sign changing solutions for $N \geq 7$, $0 \leq\la < \frac{(N-2)^2}{4}-4$ and
 $ 0 < \mu < \mu_{1}(\la)$.\\

The novelty of this article is to obtain infinitely many sign changing solutions for the Eq. \eqref{E:1.1}. We establish an estimate
on Morse index of sign changing solutions (see Theorem \ref{T:3.1})  which led us  the following existence Theorem :

\begin{thm}\label{main}
 If $N > 6+t$ , $\mu > 0$, and $\la \in [0, \frac{(k-2)^{2}}{4} - 4 )$, then \eqref{E:1.1} has infinitely many sign changing 
solutions.
\end{thm}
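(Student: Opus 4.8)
The plan is to combine a symmetric (fountain--type) minimax scheme, run on a sign--changing constraint set, with a Morse--index controlled compactness analysis in the spirit of Devillanova--Solimini \cite{SD} and Wang--Wang \cite{WW}, while keeping track of the sign of the critical points throughout.

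\emph{First, the variational set--up.} One works with the energy functional
\[ I(u)=\frac12\int_\Omega\Big(|\nabla u|^2-\la\frac{u^2}{|y|^2}\Big)\,dx-\frac{1}{2^{*}(t)}\int_\Omega\frac{|u|^{2^{*}(t)}}{|y|^{t}}\,dx-\frac{\mu}{2}\int_\Omega u^2\,dx \]
on $H^1_0(\Omega)$. Since $0\le\la<\frac{(k-2)^2}{4}$, the cylindrical Hardy inequality makes $\|u\|_\la^2:=\int_\Omega(|\nabla u|^2-\la u^2/|y|^2)\,dx$ an equivalent norm, and the Hardy--Sobolev--Maz'ya inequality gives $I\in C^2(H^1_0(\Omega))$ together with a symmetric linking geometry: $I$ is even, $I\to-\infty$ on every finite--dimensional subspace, and on the complement of the span of the first $k$ eigenfunctions of $-\De-\la|y|^{-2}$ (with $k$ large, so that $\mu$ lies below the corresponding eigenvalue) $I$ stays bounded below by a positive constant on small spheres. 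I would then introduce an increasing sequence of minimax values $c_k$, produced by symmetric families of prescribed genus on a sign--changing constraint set in the manner of \cite{SZ,DS}, and check the standard fact that $c_k\to+\infty$ as $k\to\infty$.

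\emph{Next, solutions of controlled Morse index.} Because $2^{*}(t)$ is the critical Hardy--Sobolev exponent the Palais--Smale condition fails above the first bubbling threshold, so the $c_k$ are not directly critical values. Following \cite{SD,WW} I would truncate the nonlinearity (replace the exponent $2^{*}(t)$ by $2^{*}(t)-\varepsilon$), for which Palais--Smale holds, and obtain for each $k$ a \emph{sign--changing} critical point $u_{k,\varepsilon}$ with energy close to the corresponding subcritical minimax level and Morse index bounded by a constant depending only on $k$ --- this uniform Morse--index bound is precisely the information recorded in Theorem~\ref{T:3.1}. For $k$ fixed one then lets $\varepsilon\to0$. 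By Struwe's global compactness decomposition, adapted to the weight $|y|^{-t}$ and the Hardy term $\la|y|^{-2}$, along a subsequence $u_{k,\varepsilon}$ converges weakly to a solution of \eqref{E:1.1} plus finitely many rescaled bubbles (entire solutions of the limiting Hardy--Sobolev equation on $\rn$, or of a Brezis--Nirenberg--type equation on $\rn$ or a half--space when the concentration point is off $\{y=0\}$), the number of bubbles being controlled by the Morse--index bound. The decisive point is to rule out bubbling altogether: using the pointwise concentration estimates of \cite{SD} one controls $u_{k,\varepsilon}$ by the sum of the bubbles and writes a local Pohozaev identity on small balls around the concentration points; the linear term $\mu u^2$ contributes at the bubble scale an amount of order $\lambda_\varepsilon^{2}$, while the Pohozaev boundary terms are of strictly higher order precisely when $N>6+t$, which is incompatible with $\lambda_\varepsilon\to0$. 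Hence no bubbling occurs, $u_{k,\varepsilon}\to u_k$ strongly in $H^1_0(\Omega)$, and $u_k$ is a genuine sign--changing solution of \eqref{E:1.1} with $I(u_k)=c_k$ (the sign--changing property survives the limit since on the constraint set the positive and negative parts are bounded away from $0$ in norm). As $c_k\to\infty$ the solutions $u_k$ are pairwise distinct, which proves the theorem.

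I expect the exclusion of bubbling to be the main obstacle. The pointwise concentration estimates and the local Pohozaev analysis of Devillanova--Solimini must be re--derived in the presence of the singular cylindrical weight $|y|^{-t}$ and the Hardy potential $\la|y|^{-2}$; in particular one has to separate the case of bubbles concentrating \emph{on} the singular set $\{y=0\}$, where the relevant limit is the Hardy--Sobolev equation on $\rn$ and the dimensional threshold in the Pohozaev balance is exactly $N>6+t$, from the case of bubbles concentrating \emph{away} from $\{y=0\}$, where it is the classical critical equation and the weaker condition $N>6$ suffices --- so that the stated hypothesis $N>6+t$ covers both. A secondary technical point is to ensure that the sign--changing constraint set is well behaved (natural constraint, uniform lower bounds on $\|u^{\pm}\|_\la$) along the subcritical approximation, which is needed both for the Morse--index bookkeeping in Theorem~\ref{T:3.1} and for the persistence of sign change in the limit.
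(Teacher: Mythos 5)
Your overall architecture coincides with the paper's: approximate by the subcritical exponents $2^{*}(t)-\epsilon_n$, produce sign--changing critical points with an augmented Morse index bound via a Schechter--Zou type symmetric minimax that avoids neighbourhoods of the cone of nonnegative functions (this is exactly Theorem \ref{T:3.1}), pass to the limit $\epsilon_n\to 0$ using a Devillanova--Solimini type compactness statement, and keep the sign change by the uniform lower bound $\|(u^n_l)^{\pm}\|_{\la}\ge C_0>0$. Two differences are worth recording. First, the paper does not re-derive the Struwe decomposition, the pointwise concentration estimates, or the local Pohozaev balance in the presence of the weights $|y|^{-t}$ and $\la|y|^{-2}$: it simply invokes the compactness theorem of Wang--Wang (Lemma \ref{COMPACT}, i.e.\ \cite{WW}, Theorem 1.3), which holds for \emph{any} bounded sequence of solutions of \eqref{E:2.3} under $N>6+t$ and $\la\in[0,\frac{(k-2)^2}{4}-4)$; in particular the Morse index plays no role in counting or excluding bubbles, contrary to what you suggest. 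Your plan to redo that analysis is legitimate but amounts to reproving \cite{WW}.

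Second, and this is a genuine gap in your write-up: you propose to distinguish the solutions by ``the standard fact that $c_k\to+\infty$.'' For the sign--changing minimax classes of \cite{SZ} (continuous odd images of balls in $E_{l+1}$ with the cone neighbourhoods $D(\rho)$ removed) there is no readily available diverging \emph{lower} bound of fountain type; the only a priori information is the upper bound $c_l\le\sup_{E_{l+1}}J_{\la,\epsilon_n}\le T_1\mu_{l+1}(\la)^{\frac{2^{*}(t)-\epsilon_0}{2(2^{*}(t)-\epsilon_0-2)}}$, which is useless for separating levels. This is precisely why the Morse index estimate is needed: the paper argues by contradiction that if $C(\la,l)$ stayed bounded, a diagonal sequence $u^{n_l}_l$ would be bounded, hence strongly convergent by Lemma \ref{COMPACT}, hence of bounded augmented Morse index, contradicting $m^{*}(u^{n_l}_l)\ge l$. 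Unless you can actually verify a diverging lower bound for your constrained minimax values (which is not standard in this setting), you should replace that step by the Morse--index contradiction; with that repair your argument matches the paper's.
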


\begin{rem}\label{main1}
  Recently, Chen and Zou in \cite{CZ}, proved \eqref{E:1.1} has infinitely many 
 sign changing solutions when  $k = N$, $t = 0$, $N \geq 7,$ $\mu > 0$ and $\la \in [0, \frac{(N-2)^{2}}{4} - 4 )$.
\end{rem}

\begin{rem}
Theorem \ref{main} is the cylindrical version of the result in \cite{CZ}. However 
the condition on $\lambda$ in Theorem \ref{main} is coming due to the compactness properties of solutions of Eq. 
\eqref{E:2.3} (see Lemma \ref{COMPACT}) which in turn gives $ k > 6.$
\end{rem}

As mentioned before, due to the critical nature of the Eq. \eqref{E:1.1}, the problem exhibits nonexistence phenomenon.
 First a definition:
\begin{defi}
 Let $\Omega$ be an open subset of $\rn$ with smooth boundary. We say that $\partial \Omega$ is orthogonal to the singular set if 
 for every $(0,z_{0}) \in \partial \Omega$ the normal at $(0, z_{0})$ is in $\{0\} \times \mathbb{R}^{N -k}$.
\end{defi}

 We prove the following  nonexistence result:

\begin{thm}\label{non}
 When $\mu \leq 0$, the Eq. \eqref{E:1.1} does not admits a non-trivial solution if $\Omega
 $ is star shaped with respect with respect to some point $(0, z_{0})$ and $\partial \Omega$ 
 is orthogonal to singular set.
\end{thm}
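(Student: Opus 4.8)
The plan is to establish a Pohozaev-type identity for weak solutions of \eqref{E:1.1} and exploit the geometric hypotheses on $\Omega$ to force the solution to vanish. First I would multiply the equation by $x \cdot \nabla u$ (equivalently, by the generator of dilations centered at the origin) and integrate over $\Omega$; since $\Omega$ contains points of the form $(0,z_0)$, it is natural to work with dilations based at such a point $(0,z_0)$, writing $x = (y,z)$ and using $\nabla_{(y,z)}$. The key observation is that each of the three nonlinear/singular terms is a pure power with a precise homogeneity: the Hardy term $\la u/|y|^2$ behaves like $|x|^{-2}$ acting on $u$, and the critical term $|u|^{2^*(t)-2}u/|y|^t$ carries the exponent $2^*(t) = \tfrac{2(N-t)}{N-2}$, which is exactly the critical Sobolev exponent for the weight $|y|^{-t}$. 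Because of this scaling invariance, the bulk terms coming from the Hardy and critical contributions cancel after integration by parts (this is precisely the reason $2^*(t)$ is the borderline exponent), leaving only the Dirichlet energy term, the boundary term, and the term proportional to $\mu \int_\Omega u^2$.

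Carrying this out, I expect the identity to take the schematic form
\begin{equation*}
\frac{1}{2}\int_{\partial\Omega} |\nabla u|^2 \, (x-x_0)\cdot \nu \; d\sigma = \mu \int_{\Omega} u^2 \, dx,
\end{equation*}
after using $u = 0$ on $\partial\Omega$ (so that $\nabla u = (\partial_\nu u)\,\nu$ there) and the vanishing of the singular bulk terms; here $x_0 = (0,z_0)$ and $\nu$ is the outward unit normal. The role of the orthogonality hypothesis on $\partial\Omega$ is to control the possibly singular boundary contribution near points $(0,z_0)\in\partial\Omega$: because the weight $|y|^{-t}$ (and $|y|^{-2}$) blows up only on $\{0\}\times\mathbb R^{N-k}$, one must verify that the boundary flux integrals involving these weights are finite and, in fact, vanish; the condition that the normal at $(0,z_0)$ lies in $\{0\}\times\mathbb R^{N-k}$ means $\partial\Omega$ is ``vertical'' at the singular points, which makes $|y|$ bounded below on $\partial\Omega$ away from those points and lets the singular boundary terms be discarded (this is the cylindrical analogue of the argument in \cite{SB}). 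The star-shapedness of $\Omega$ with respect to $x_0 = (0,z_0)$ gives $(x-x_0)\cdot\nu \ge 0$ on $\partial\Omega$, so the left-hand side is nonnegative.

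Finally, when $\mu \le 0$ the right-hand side $\mu\int_\Omega u^2\,dx$ is nonpositive, so both sides must equal zero; combined with $(x-x_0)\cdot\nu \ge 0$ one concludes $\partial_\nu u = 0$ on the portion of $\partial\Omega$ where $(x-x_0)\cdot\nu > 0$, and then a unique continuation argument (or a second application of the multiplier identity with a translation vector field, plus the equation itself) forces $u \equiv 0$. Alternatively, if $\mu < 0$ strictly, then $\mu\int_\Omega u^2 \le 0$ with equality only if $u\equiv 0$, giving the conclusion immediately; the borderline case $\mu = 0$ reduces exactly to the situation treated in \cite{SB}. The main obstacle I anticipate is the rigorous justification of the Pohozaev identity itself: weak $H^1_0$ solutions need not be smooth up to $\{0\}\times\mathbb R^{N-k}$, so one must first establish enough regularity (interior elliptic regularity away from the singular set, plus a careful estimate of $u$ and $\nabla u$ near $\{y=0\}$ using the Hardy inequality and the sub-criticality of the effective exponent) to legitimately integrate by parts and to show the cut-off contributions around the singular set vanish in the limit. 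This truncation-and-limit procedure near the singular set, rather than the algebra of the identity, is where the real work lies.
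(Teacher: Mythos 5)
Your proposal follows essentially the same route as the paper: a Pohozaev identity obtained by testing against $(x\cdot\nabla u)$ with a cut-off near the singular set $\{y=0\}$, justified by partial regularity ($u_{z_i}\in H^1(\Omega)$, adapted from \cite{SB}), leading to exactly the identity $\tfrac12\int_{\partial\Omega}(\partial_\nu u)^2\,(x\cdot\nu)\,d\sigma=\mu\int_\Omega u^2\,dx$ and the conclusion via star-shapedness, $\mu\le 0$, and unique continuation. You have also correctly located the real technical work in the truncation-and-limit argument near the singular set, which is precisely how the paper proceeds.
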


\begin{rem}
 When $\la = t = 0$, the Eq. \eqref{E:1.1} is well studied in \cite{SZ}. Hence we assume  either $\la > 0$ or 
$t > 0$. 
\end{rem}
   
 We divide the article in to four sections. Section 2  dicusses the notations and preliminaries, Section 3 is devoted
 to the existence and the estimate of Morse index of sign changing solutions.
 The results of Section 3 are used to prove the Theorem
 \ref{main}  in Section 4.

\section{Notations and preliminaries}
   
We will always denote points in $\mathbb{R}^{k} \times \mathbb{R}^{N - k}$ as pairs $x = (y, z)$, assuming $2 < k < N$ and 
$0 \leq t < 2$.\\
Through out this paper, we denote the norm of $H_{0}^{1}(\Omega)$ by $||u||= (\int_{\Omega} |\nabla u|^{2}dx)^{\frac{1}{2}}$, where 
$dx$ denote the Lebesgue measure in $\rn$.
 Let $u : \Omega \rightarrow \mathbb{R}$ be a real valued measurable function defined on $\Omega.$ We define 
\begin{equation}\label{n}
 |u|_{q,t,\Omega} = \left( \int_{\Omega}\frac{|u|^q}{|y|^t} dx \right )
^{\frac{1}{q}}
\end{equation}
and we say $u \in L^{q}_{t}(\Omega)$ if $ |u|_{q,t,\Omega}  < \infty.$

$D^{1,2}(\rn)$ will denote the closure of $C_{c}^{\infty}(\rn)$ with respect to the norm 
$||u||_{1} = (\int_{\rn}|\nabla u|^{2} dx)^\frac{1}{2}.$
We list here a few integral inequalities, for details we refer to \cite{V}. The first inequality we state is the Hardy inequality.\\

{\bf Hardy Inequality:} For $k > 2$ we have,  
\begin{equation}\label{E:2.1}
 C_{k}\int_{\rn} |y|^{-2} |u|^{2} dx \leq \int_{\rn} |\nabla u|^{2} dx , \; \forall u \in D^{1,2}(\rn),
\end{equation} \vspace*{.5 cm}

where $C_{k} = \left(\frac{k-2}{2}\right)^2$ is the best constant and is not attained.

For any $\lambda \in (0, (\frac{k-2}{2})^2),$ let us introduce the Hilbert space $H^{1}_{0}(\Omega)$ equipped 
with the inner product 
\[
 \langle u, v \rangle_{\lambda} : = \int_{\Omega} \left( \nabla u \nabla v - \frac{\lambda}{|y|^2} uv \right) dx,
\]
which induces the norm 
\begin{equation}\label{equi}
 ||u||_{\lambda} := \left[ \int_{\Omega} \left( |\nabla u|^2 - \lambda \frac{|u|^2}{|y|^2} \right) dx \right]^{\frac{1}{2}}.
\end{equation}
By the Hardy inequality \eqref{E:2.1}, we get that 
\begin{equation}\label{equi1}
 \left(1 - \frac{\lambda}{C_{k}} \right)^{\frac{1}{2}}||u|| \leq ||u||_{\lambda},
\end{equation}
hence $||.||_{\lambda}$ and $||.||$ are equivalent norms.


Another consequence of the Hardy inequality is that if  $\la < \frac{(k-2)^2}{4}$ then, 
\[
  L[.] \equiv - \De - \frac{\la}{|y|^2}  
\]
is positive definite and has discrete Spectrum in $ H^{1}_{0}(\Omega)$. 

 Let $\mu_{1}(\la)$ be the first eigenvalue of the operator $L[.]$ in $H^{1}_{0}(\Omega)$, then it is
  characterized by the following variational principle :
\begin{equation}\label{LO}
\mu_{1}(\la) = \min_{u \in H^{1}_{0}(\Omega)} \frac{\int_{\Omega} |\nabla u|^2 dx -
 \la \int_{\Omega} \frac{u^2}{|y|^2} dx}{\int_{\Omega} u^2} .
 \end{equation}\vspace*{.5 cm}

Now it is easy to note that, if $\mu \geq \mu_{1}(\la)$ , any nontrivial solution of \eqref{E:1.1} 
is sign changing. This can be seen by multiplying the first eigenfunction of 
the operator $(- \De - \frac{\la}{|y|^2})$ in $H^{1}_{0}(\Omega)$
with zero-boundary value problem and 
integrating both sides. Thus, by the result of \cite{WW}, Eq. \eqref{E:1.1} admits  
solutions and all solutions change sign.
 Hence, from now onwards we shall only consider $ 0 < \mu < \mu_{1}(\la)$.

The starting point for studying \eqref{E:1.1} is the Hardy-Sobolev-Maz'ya inequality, that is for the case $k < N$ and  was
proved by Maz'ya in \cite{V}. Now recall the HSM inequality.\\

{\bf Hardy-Sobolev-Maz'ya(HSM) Inequality:} Let $p > 2$ and $p \leq \frac{2N}{N-2}$ if $N \geq 3$. Let $t = N - \frac{N-2}{2}p$.
Then there is $C = C(N,p)$ such that 
\begin{equation}\label{E:2.2}
 \left(\int_{\mathbb{R}^k \times \mathbb{R}^{N-k}} \frac{|u|^p}{|y|^t} dy dz\right)^{\frac{2}{p}} \leq C 
\int_{\mathbb{R}^k \times \mathbb{R}^{N-k}} \left[|\nabla u|^2   - \frac{(k-2)^2}{4} \frac{u^2}{|y|^2} \right] dy dz
\end{equation}
for all $u \in C_{c}^{\infty}(\mathbb{R}^k \times \mathbb{R}^{N-k})$.\\

Let us derive the following weighted $L^{p}$ embedding.
\begin{lem}\label{LP}
 If $\Omega$ is a bounded subset of $\rn$ = $\mathbb{R}^k \times \mathbb{R}^{N-k}$, $0 \leq t < 2$, then
\[
 L^{p}_{t}(\Omega) \subset L^{q}_{t}(\Omega)
\]
with the inclusion being continuous, whenever $1 \leq q \leq p < \infty$.  
\end{lem}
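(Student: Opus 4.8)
The plan is to prove the weighted embedding $L^p_t(\Omega) \subset L^q_t(\Omega)$ for $1 \le q \le p < \infty$ by a direct application of H\"older's inequality with respect to the weighted measure $d\nu = |y|^{-t}\,dx$. The point is that once we fix the weight, the statement is just the standard nesting of $L^p$ spaces on a finite measure space, so the only genuine thing to check is that $\nu(\Omega) = \int_\Omega |y|^{-t}\,dx$ is finite; this is where the hypotheses $\Omega$ bounded and $0 \le t < 2 < k$ are used.

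First I would observe that, since $\Omega$ is bounded, it is contained in a ball $B_R \subset \rn = \mathbb{R}^k \times \mathbb{R}^{N-k}$, and hence
\[
 \int_\Omega \frac{dx}{|y|^t} \le \int_{B_R} \frac{dx}{|y|^t}
 = \int_{\{|z| < R\}} \left( \int_{\{|y| < R\}} \frac{dy}{|y|^t} \right) dz .
\]
The inner integral over $\mathbb{R}^k$ is finite precisely because $t < k$ (indeed $t < 2 < k$), by passing to polar coordinates in $\mathbb{R}^k$: $\int_{\{|y|<R\}} |y|^{-t}\,dy = \omega_{k-1} \int_0^R r^{k-1-t}\,dr < \infty$ since $k - 1 - t > -1$. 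The outer integral over $\{|z|<R\} \subset \mathbb{R}^{N-k}$ is over a bounded set with bounded integrand, hence finite. Therefore $\nu(\Omega) < \infty$.

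Next, given $u \in L^p_t(\Omega)$ with $1 \le q \le p < \infty$, I would apply H\"older's inequality with conjugate exponents $\frac{p}{q}$ and $\frac{p}{p-q}$ (the case $q = p$ being trivial) to the measure $\nu$:
\[
 \int_\Omega \frac{|u|^q}{|y|^t}\,dx = \int_\Omega |u|^q \, d\nu
 \le \left( \int_\Omega |u|^p \, d\nu \right)^{\frac{q}{p}} \left( \int_\Omega 1 \, d\nu \right)^{\frac{p-q}{p}}
 = |u|_{p,t,\Omega}^{\,q} \; \nu(\Omega)^{\frac{p-q}{p}} .
\]
Raising to the power $1/q$ gives $|u|_{q,t,\Omega} \le \nu(\Omega)^{\frac{p-q}{pq}} \, |u|_{p,t,\Omega}$, which shows both that $u \in L^q_t(\Omega)$ and that the inclusion map is continuous (indeed bounded with explicit constant).

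There is no real obstacle here; the only thing one must be careful about is to verify the finiteness of $\int_\Omega |y|^{-t}\,dx$ rather than take it for granted, since the weight is singular along the set $\{y = 0\}$, which may well intersect $\overline{\Omega}$ (the whole setup assumes $\Omega$ contains points $(0,z^0)$). As noted, the condition $t < 2 < k$ comfortably guarantees local integrability of $|y|^{-t}$ in $\mathbb{R}^k$, so the Fubini argument above closes the proof.
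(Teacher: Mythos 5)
Your proof is correct and is essentially the paper's own argument: both apply H\"older's inequality with exponents $\tfrac{p}{q}$ and $\tfrac{p}{p-q}$ against the weight $|y|^{-t}$ and reduce the matter to the finiteness of $\int_\Omega |y|^{-t}\,dx$. You merely make explicit (via polar coordinates in the $y$-variable) the integrability that the paper asserts in one line from $t<2<k$, which is a welcome but not substantively different addition.
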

\begin{proof}
 Let $1 \leq q < p < \infty$ and  $f \in L^{p}_{t}(\Omega)$. Then by H\"{o}lder's inequality we have 
\begin{eqnarray*}
 \int_{\Omega} \frac{|f|^{q}}{|y|^t} dx && = \int_{\Omega} \frac{|f|^{q}}{|y|^{t\frac{q}{p}}} \frac{1}{|y|^{t(1- \frac{q}{p})}} dx \\
&& \leq \left( \int_{\Omega} \frac{|f|^p}{|y|^t} dx  \right)^{\frac{q}{p}} \left( \int_{\Omega}\frac{1}{|y|^t} dx     
\right)^{\frac{p-q}{p}}\\
&& \leq C \left( \int_{\Omega} \frac{|f|^p}{|y|^t} dx  \right)^{\frac{q}{p}}.
\end{eqnarray*}
Since second term in the RHS is finite as $t < 2$ and $k > 2$, hence we have 
\[
 |f|_{q,t,\Omega} \leq C |f|_{p,t,\Omega}.
\]
This completes the proof.
\end{proof}

\begin{rem}
 If $f \in L^{p}_{t}(\Omega)$ for $1\leq p < \infty$, then clearly $f \in L^{p}(\Omega)$ with 
\[
 ||f||_{p} \leq C |f|_{p,t,\Omega}.
\]
\end{rem}\vspace*{.25cm}
 Let us prove the following compactness Result.
\begin{lem}\label{CP1}
 Let $1 \leq q < 2^{*}(t)$, $0\leq t < 2$, then the embedding $H^{1}_{0}(\Omega) \hookrightarrow L^{q}_{t}(\Omega)$ is compact.
\end{lem}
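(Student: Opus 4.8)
The plan is to prove the compact embedding $H^1_0(\Omega) \hookrightarrow L^q_t(\Omega)$ for $1 \le q < 2^*(t)$ by interpolating between two facts: the \emph{bounded} (non-compact) embedding into $L^{2^*(t)}_t(\Omega)$ coming from the Hardy--Sobolev--Maz'ya inequality \eqref{E:2.2} together with \eqref{equi1}, and the \emph{compact} embedding into a space slightly below the critical exponent. Concretely, fix $q$ with $1 \le q < 2^*(t)$. By Lemma \ref{LP} it suffices to treat $q$ close to (but strictly below) $2^*(t)$, since $L^{2^*(t)}_t$-boundedness plus that lemma handles the lower range, and compactness into a larger $L^q_t$-space passes down to smaller $q$ by the continuous inclusion in Lemma \ref{LP} (convergence in the stronger norm implies convergence in the weaker one). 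So assume $2 \le q < 2^*(t)$.

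First I would take a sequence $u_n \rightharpoonup u$ weakly in $H^1_0(\Omega)$; after subtracting $u$ we may assume $u_n \rightharpoonup 0$, so $(u_n)$ is bounded in $H^1_0(\Omega)$, hence bounded in $L^{2^*(t)}_t(\Omega)$ by \eqref{E:2.2} and \eqref{equi1}. The goal is $|u_n|_{q,t,\Omega} \to 0$. The key is to split the integral $\int_\Omega |u_n|^q |y|^{-t}\,dx$ over the region near the singular set $\{y=0\}$ and its complement. Fix $\delta > 0$ and write $\Omega = (\Omega \cap \{|y| < \delta\}) \cup (\Omega \cap \{|y| \ge \delta\})$. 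On $\{|y| \ge \delta\}$ the weight $|y|^{-t}$ is bounded by $\delta^{-t}$, so there $\int |u_n|^q |y|^{-t} \le \delta^{-t} \|u_n\|_{L^q(\Omega)}^q \to 0$ by the classical Rellich--Kondrachov theorem (valid since $q < 2^*(t) \le 2^*(0) = 2N/(N-2)$). On $\{|y| < \delta\}$ I use H\"older's inequality with exponents $2^*(t)/q$ and its conjugate, writing $|u_n|^q |y|^{-t} = \big(|u_n|^q |y|^{-tq/2^*(t)}\big)\cdot |y|^{-t(1 - q/2^*(t))}$, to get
\[
\int_{\Omega \cap \{|y|<\delta\}} \frac{|u_n|^q}{|y|^t}\,dx \le \Big(\int_\Omega \frac{|u_n|^{2^*(t)}}{|y|^t}\,dx\Big)^{q/2^*(t)} \Big(\int_{\Omega \cap \{|y|<\delta\}} \frac{1}{|y|^t}\,dx\Big)^{1 - q/2^*(t)}.
\]
The first factor is bounded uniformly in $n$; the second factor is finite because $t < 2 < k$ and tends to $0$ as $\delta \to 0$. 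Given $\varepsilon > 0$, choose $\delta$ so that the $\{|y|<\delta\}$-contribution is below $\varepsilon$ uniformly in $n$, then let $n \to \infty$ to kill the $\{|y| \ge \delta\}$-contribution; this yields $\limsup_n |u_n|_{q,t,\Omega}^q \le \varepsilon$, and since $\varepsilon$ is arbitrary, $u_n \to 0$ in $L^q_t(\Omega)$.

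The main obstacle is genuinely the behaviour near the singular set $\{y=0\}$, where $|y|^{-t}$ is unbounded and the classical Rellich theorem does not directly apply; the device above converts this into an estimate controlled by the (merely bounded) critical Hardy--Sobolev--Maz'ya norm times a small-measure factor, which is exactly where the strict inequality $q < 2^*(t)$ is used. One should also double-check the borderline integrability $\int_{\{|y|<\delta\}} |y|^{-t}\,dx < \infty$: in $\mathbb{R}^k \times \mathbb{R}^{N-k}$ with $\Omega$ bounded this reduces to $\int_{\{|y|<\delta\} \subset \mathbb{R}^k} |y|^{-t}\,dy$ times a bounded $z$-volume, and $\int_{|y|<\delta} |y|^{-t}\,dy \sim \int_0^\delta r^{k-1-t}\,dr$ converges since $k - t > k - 2 > 0$. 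With these ingredients in place the argument is routine.
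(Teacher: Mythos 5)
Your proof is correct and follows essentially the same route as the paper's: both split the integral over $\{|y|<\delta\}$ and $\{|y|\ge\delta\}$, handle the singular region by H\"older's inequality against the uniformly bounded critical $L^{2^*(t)}_t$ norm (via the Hardy--Sobolev--Maz'ya inequality) together with the smallness of $\int_{\{|y|<\delta\}}|y|^{-t}\,dy\,dz$, and handle the complement by the classical Rellich theorem after bounding the weight by $\delta^{-t}$. The preliminary reduction to $q$ near $2^*(t)$ via Lemma \ref{LP} is unnecessary, since the argument works directly for all $1\le q<2^*(t)$, but it does no harm.
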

\begin{proof}
 Let $\{u_{n}\}_{n}$ be a bounded sequence in $H^{1}_{0}(\Omega)$. Then upto a subsequence we may assume $u_{n} \rightharpoonup u$ 
in $H^{1}_{0}(\Omega)$ and pointwise. To complete the proof we need to show  $u_{n} \rightarrow u$ in $L^{q}_{t}(\Omega)$. We estimate

\begin{eqnarray*}
 \int_{\Omega} \frac{|u_{n} -u|^q}{|y|^t} dx && = \iint_{|y|<\delta} \frac{|u_{n} -u|^q}{|y|^t} dy \ dz +
 \iint_{|y| \geq \delta} \frac{|u_{n} -u|^q}{|y|^t} dy \ dz\\
&& \leq \iint_{|y|<\delta} \frac{|u_{n} -u|^q}{|y|^t} dy \ dz +
 \frac{1}{\delta^t} \iint_{|y| \geq \delta} |u_{n} - u|^{q} dy \ dz.
\end{eqnarray*}
The convergence of the 2nd integral follow from Relich Compactness Theorem, since $2^{*}(t) < 2^*$. 
On the other hand, by H\"{o}lder's inequality
and Hardy-Sobolev-Maz'ya inequality \eqref{E:2.2}, we get, 
\begin{eqnarray*}
 \iint_{|y| < \delta} \frac{|u_{n} -u|^q}{|y|^t} dx  && \leq 
\left( \iint_{|y| < \delta}  \frac{dy  dz}{|y|^t}  \right)^{\frac{2^*(t) - q}{2^*(t)}} 
\left( \int_{\Omega} \frac{|u_{n} - u|^{2^*(t)}}{|y|^t} dx  \right)^{\frac{q}{2^*(t)}}\\
  && \leq C ||u_{n} -u||^{q}
\end{eqnarray*}
for some positive constant $C$. Therefore, for a given $\epsilon > 0$, we can choose  a $\delta$ such that 
\[
 \iint_{|y| < \delta} \frac{|u_{n} -u|^q}{|y|^t} dx < \frac{\epsilon}{2}.
\]
 Therefore, 
\[
 \int_{\Omega} \frac{|u_{n} -u|^q}{|y|^t} dx < \epsilon
\]
for large $n$. Hence this proves the lemma.
\end{proof}

 We recall that the solutions of \eqref{E:1.1} are the critical points of the energy functional given by   
    
\begin{equation}\label{LF}
J_{\la}(u) = \frac{1}{2} \int_{\Omega} |\nabla u|^{2} dx - \frac{\la}{2} \int_{\Omega} \frac{|u|^{2}}{|y|^{2}} 
- \frac{1}{2^{*}(t)} \int_{\Omega} \frac{|u|^{2^*(t)}}{|y|^{t}} dx - \frac{\mu}{2} \int_{\Omega} u^{2} dx.
\end{equation}
Then $J_{\la}$ is a well defined $C^{1}$ functional on $H^{1}_{0}(\Omega)$, thanks to the Hardy-Sobolev-Maz'ya inequality
\eqref{E:2.2}.  
    
 We use  variational methods in order to prove Theorem \eqref{main}. The main tool is an abstract theorem  
 proved by Schechter and Zou (see \cite{SZ}, Theorem 2). However we can not conclude theorem \eqref{main} by directly using the 
abstract theorem. This is because the variational problem corresponding to \eqref{E:1.1}
does not satisfy the Palais-Smale condition, therefore to overcome this difficulty we argue 
 as in \cite{SZ}, for each $\epsilon_{n} > 0 $ we 
 obtain a sequence $\{u_{l}^{n}\}_{l \in \mathbb{N}}$ of sign changing 
 solutions of the following subcritical problem 
 
 \begin{equation}\label{E:2.3}
\left.
 \begin{array}{rlll}
  -\De  u- \la \dfrac{u}{|y|^{2}} & = & \dfrac{|u|^{2^*(t)-2 - \epsilon_{n} }u}{|y|^{t}} + \mu u &{\rm in} \; \Omega, \\
u & = &  0 &{\rm on} \; \partial \Omega,
 \end{array} \right\}
\end{equation}
with a lower bound on the Morse index. Then we  prove that for fixed  $l \in \mathbb{N}$, 
 $\sup_{n \in \mathbb{N}} ||u^{n}_{l}||_{H^{1}_{0}(\Omega)}  
 < \infty$. These are dicussed in Section 3.

\section{Existence and Morse Index of sign-changing critical points}

In this section we will prove the existence of sign changing solutions for the perturbed compact problem \eqref{E:2.3} with 
an estimate on  Morse index. This is done by 
using the abstract theorem of Schechter and Zou (see \cite{SZ}, Theorem 2). However we cannot directly 
apply it due to the presence of 
the singular 
Hardy term  and Hardy-Sobolev-Maz'ya term in Eq. \eqref{E:2.3}, hence we need some precise estimates.\\

In the sequel we assume that  $H^{1}_{0}(\Omega)$ is endowed with $||.||_{\la}$  norm as defined in \eqref{equi} unless and 
otherwise mentioned.\\
   
 Let $0 < \mu_{1}(\la) < \mu_{2}(\la) \leq \mu_{3}(\la) \ldots \leq \mu_{l}(\la) \leq \ldots $ be the 
 eigenvalues of $(-\De - \frac{\la}{|y|^2})$
 on $H^{1}_{0}(\Omega)$
 and $\phi_{l}(x)$ be the eigenfunction corresponding 
 to $\mu_{l}(\la)$. Denote $E_{l} : = $span$\{\phi_{1}, \phi_{2}, \ldots, \phi_{l} \}$. Then 
 $H^{1}_{0}(\Omega)=\overline{ \cup_{l=1}^{\infty} E_{l}}$, dim$E_{l} = l$ and $E_{l} \subset E_{l+1}$.\\
We fix a $\epsilon_{0} > 0$ small enough and 
 choose a sequence $\epsilon_{n}$ in $(0, \epsilon_{0})$ such that $\epsilon_{n}\rightarrow 0$ in Eq. \eqref{E:2.3}.

\begin{thm}\label{T:3.1}

 Fix $\la \in [0, \frac{(k-2)^2}{4} -4)$, $\mu > 0$, then for every $n$ the Eq. \eqref{E:2.3} 
has infinitely many sign changing solutions 
$\{u^{n}_{l}\}_{l=1}^{\infty}$ such that for each $l$, the sequence $\{u_{l}^{n}\}_{n=1}^{\infty}$ is bounded 
in $H^{1}_{0}(\Omega)$ and the 
augmented Morse index of $u^{n}_{l}$ on the space $H^{1}_{0}(\Omega)$ is greater than or equal to $l$.
\end{thm}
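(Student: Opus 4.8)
The proof proceeds by applying the abstract critical point theorem of Schechter and Zou (\cite{SZ}, Theorem~2) to the $C^1$ functional associated with the \emph{subcritical} problem \eqref{E:2.3}, namely
\[
J_{\la,n}(u) = \frac{1}{2}\|u\|_{\la}^{2} - \frac{1}{2^{*}(t)-\epsilon_{n}}\int_{\Omega}\frac{|u|^{2^{*}(t)-\epsilon_{n}}}{|y|^{t}}\,dx - \frac{\mu}{2}\int_{\Omega}u^{2}\,dx,
\]
where $H^{1}_{0}(\Omega)$ is endowed with the equivalent norm $\|\cdot\|_{\la}$. Since $2^{*}(t)-\epsilon_{n} < 2^{*}(t)$, Lemma~\ref{CP1} gives that the embedding $H^{1}_{0}(\Omega)\hookrightarrow L^{2^{*}(t)-\epsilon_{n}}_{t}(\Omega)$ is compact; this is precisely what restores the Palais--Smale condition for $J_{\la,n}$ at every level (standard concentration-compactness-free argument using the compact embedding plus Brezis--Lieb). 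So the first step is to verify the structural hypotheses of the Schechter--Zou theorem: the $\mathbb{Z}_{2}$-invariance ($J_{\la,n}$ even), the linking/mountain-pass geometry on the decomposition $H^{1}_{0}(\Omega) = E_{l-1}\oplus E_{l-1}^{\perp}$, and a genus-type intersection property that produces, for each $l$, a critical value $c_{l}^{n}$ with a minimax characterization over sets of genus $\geq l$. The eigenfunction decomposition $H^{1}_{0}(\Omega) = \overline{\bigcup_{l} E_{l}}$ set up in the excerpt is exactly the finite-dimensional filtration needed here.

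\textbf{Morse index and sign-change.} The second step extracts, from the Schechter--Zou machinery, a critical point $u_{l}^{n}$ at level $c_{l}^{n}$ whose \emph{augmented} Morse index (dimension of the subspace on which $J_{\la,n}''(u_{l}^{n})$ is negative \emph{semi}definite) is at least $l$: this is the content of their abstract theorem once one checks that the relevant minimax class links a subspace of dimension $l$. That $u_{l}^{n}$ changes sign for $l$ large follows because a positive (or negative) solution has augmented Morse index bounded by a fixed constant depending only on $\mu/\mu_{1}(\la)$ — concretely, for a one-signed solution the quadratic form $J_{\la,n}''(u)[\varphi,\varphi]$ restricted to $E_{m}$ is controlled via the eigenvalues $\mu_{j}(\la)$, and since $0<\mu<\mu_{1}(\la)$ the index stays small; hence for $l$ exceeding that bound $u_{l}^{n}$ must be nodal. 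One then discards the finitely many small-$l$ indices and relabels, so that $\{u_{l}^{n}\}_{l=1}^{\infty}$ are all sign changing with augmented Morse index $\geq l$.

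\textbf{Uniform bound in $n$.} The third, and genuinely delicate, step is the claim that for each fixed $l$ the family $\{u_{l}^{n}\}_{n}$ is bounded in $H^{1}_{0}(\Omega)$ uniformly in $n$. This does not follow from the $n$-dependent Palais--Smale bound; instead one bounds the minimax levels $c_{l}^{n}$ \emph{uniformly in $n$} by testing the minimax class with a fixed $l$-dimensional subspace (e.g. $E_{l}$ or a suitable family built from the first $l$ eigenfunctions) and estimating $\sup_{E_{l}} J_{\la,n}$; since on the finite-dimensional space $E_{l}$ the nonlinear term $\int|u|^{2^{*}(t)-\epsilon_{n}}/|y|^{t}$ converges monotonically as $\epsilon_{n}\to 0$ and all norms are equivalent, $\sup_{E_{l}}J_{\la,n}\le C_{l}$ independently of $n$. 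Combining $c_{l}^{n}\le C_{l}$ with the Nehari-type identity $J_{\la,n}(u_{l}^{n}) - \tfrac{1}{2}\langle J_{\la,n}'(u_{l}^{n}),u_{l}^{n}\rangle = \big(\tfrac12 - \tfrac{1}{2^{*}(t)-\epsilon_{n}}\big)\int_{\Omega}\frac{|u_{l}^{n}|^{2^{*}(t)-\epsilon_{n}}}{|y|^{t}}\,dx$ and $\langle J_{\la,n}'(u_{l}^{n}),u_{l}^{n}\rangle = 0$, one gets a uniform bound on $\int\frac{|u_{l}^{n}|^{2^{*}(t)-\epsilon_{n}}}{|y|^{t}}$, hence on $\|u_{l}^{n}\|^{2}_{\la}$ after using $0<\mu<\mu_{1}(\la)$ to absorb the $\mu\int (u_l^n)^2$ term via the variational characterization \eqref{LO}. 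The main obstacle is making the exponent $\epsilon_{n}\to 0$ estimates genuinely uniform — controlling $\big(\tfrac12-\tfrac1{2^{*}(t)-\epsilon_{n}}\big)^{-1}$, which stays bounded since $\epsilon_0$ is small, and handling the weighted $L^{p}_{t}$ interpolation via Lemma~\ref{LP} so that the subcritical norms are dominated by the critical Hardy--Sobolev--Maz'ya norm \eqref{E:2.2} with constants independent of $n$. Once these three steps are in place the theorem follows.
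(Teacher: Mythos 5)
Your overall skeleton --- restore Palais--Smale via the compact embedding of Lemma \ref{CP1}, invoke Schechter--Zou's Theorem 2 to get critical points with augmented Morse index $\geq l$, and then bound the levels uniformly in $n$ by estimating $\sup_{E_{l+1}}J_{\la,\epsilon_n}$ and converting the level bound into an $H^1_0$ bound through the identity $J_{\la,\epsilon_n}(u)-\tfrac12\langle J_{\la,\epsilon_n}'(u),u\rangle$ --- matches the paper, and your Step 3 is essentially the paper's ``Claim'' plus its Proposition \ref{P:3.4}. But there is a genuine gap in how you obtain the \emph{sign-changing} property. The Schechter--Zou theorem is not a bare symmetric-linking/genus theorem: its essential structural hypothesis, and the entire mechanism by which it produces nodal (rather than merely nontrivial) critical points, is the invariance of a neighborhood $D(\rho)$ of the cone $P=\{u\geq 0\}$ under the compact part $K_{\la,\epsilon_n}$ of the gradient, together with the fact that all critical points inside $D(\rho_0)$ already lie in $P$. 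Verifying this is the technical heart of the paper's Section 3 (Proposition \ref{P:3.2}: the decomposition $K=L+W$, the contraction estimate $\|Lu\|_\la\leq\alpha\|u\|_\la$ using $\mu<\mu_1(\la)$, and the estimate $\mathrm{dist}(W(u),P)\leq C[\mathrm{dist}(u,P)]^{p_n(t)-1}$ via the Hardy--Sobolev--Maz'ya inequality). The minimax is then run over sets avoiding $\pm D(\rho)$, so the critical point it yields is automatically sign changing. Your proposal omits this entirely.

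The substitute you offer --- that any one-signed solution has augmented Morse index bounded by a constant depending only on $\mu/\mu_1(\la)$, so that for large $l$ the critical points must be nodal --- is not correct. The second variation is
\[
J_{\la,\epsilon_n}''(u)[\varphi,\varphi]=\|\varphi\|_\la^2-\mu\int_\Omega\varphi^2\,dx-(2^*(t)-\epsilon_n-1)\int_\Omega\frac{|u|^{2^*(t)-\epsilon_n-2}\varphi^2}{|y|^t}\,dx,
\]
and the last term is not controlled by the eigenvalues $\mu_j(\la)$ alone: it grows with the solution itself. Positive solutions of superlinear problems can have arbitrarily large Morse index (e.g.\ multi-bump positive solutions in suitable domains), so no uniform bound of the kind you assert is available, and in any case your relabeling would decouple the index $l$ from the lower bound ``augmented Morse index $\geq l$'' asserted in the statement. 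To repair the proof you must prove the cone-invariance property (Proposition \ref{P:3.2}) and cite it as the hypothesis of the abstract theorem; one minor additional point is that $J_{\la,\epsilon_n}$ must be checked to be $C^2$ (not merely $C^1$) for the Morse index assertion to make sense.
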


Let us denote the energy functional corresponding to \eqref{E:2.3} by 
\begin{equation}\label{F}
 J_{\la, \epsilon_{n}}(u) = \frac{1}{2} \int_{\Omega} \left( |\nabla u|^2  - \frac{\la}{|y|^{2}} u^2 - \mu u^2 \right) dx
 - \frac{1}{2^*(t)- \epsilon_{n}}
\int_{\Omega} \frac{|u|^{2^*(t) - \epsilon_{n}}}{|y|^{t}} dx, 
\end{equation}
then the  singular term $\int_{\Omega} \frac{|u|^{2^*(t) -\epsilon }}{|y|^{t}}$is finite by Lemma \ref{LP} 
and Hardy-Sobolev-Maz'ya inequality \eqref{E:2.2}. 
 Hence $J_{\la,\epsilon_{n}}$ is a $C^{2}$, even functional on $H^{1}_{0}(\Omega)$. 
 In view of Lemma \ref{CP1}, $J_{\la,\epsilon_{n}}$ also 
 satisfies the Palais-Smale condition. In order to prove the Theorem \ref{T:3.1}, it is enough to obtain sign changing critical
points for the functional  $J_{\la, \epsilon_{n}}$ . 

 Recall the Augmented Morse index $m^* (u_{l}^{n})$ of $u^{n}_{l}$ in the space $H^{1}_{0}(\Omega)$ is defined as
 \[
  m^*(u_{l}^{n}) = \mbox{max}\{ \mbox{dim} H : H \subset H^{1}_{0}(\Omega)~ \mbox{is a subspace such that}~ 
  J_{\la, \epsilon_{n}}^{\prime\prime}(h,h) ~ \leq 0 ~ \forall ~ h ~ \in  H  \}.
 \]
For each $\epsilon_{n} \in (0, \epsilon_{0})$ fixed, we define,
\[
 ||u||_{n,*} := ||u||_{*} = \left[\int_{\Omega} 
\frac{|u|^{2^*(t) - \epsilon_{n}}}{|y|^{t}} dx    \right]^{\frac{1}{2^*(t)- \epsilon_{n}}} \; \; 
u \in H^{1}_{0}(\Omega),
\]
then from Lemma \ref{LP} and Hardy-Sobolev-Maz'ya inequality 
\eqref{E:2.2}, we get $||u||_{*} \leq C ||u||_{\la}$ for all $u \in H^{1}_{0}(\Omega)$ for some constant $C > 0$. Moreover we have,
for fixed $n,$ 
$||v_{k} - v||_{*} \rightarrow 0$ whenever $v_{k} \rightharpoonup v $ weakly in $H^{1}_{0}(\Omega)$, thanks to Lemma \ref{CP1}.

We write $P := \{u \in H^{1}_{0}(\Omega) : u \geq 0\}$ for the convex cone of nonnegative functions in $H^{1}_{0}(\Omega).$

Define for $\delta > 0$,
\[
 D(\delta) := \{ u \in H^{1}_{0} : \mbox{dist}(u, P) < \delta \}.
\]
  Denote the set of all critical points by
\[
 K^{\la}_{n} := \{ u \in H^{1}_{0}(\Omega) : J'_{\la, \epsilon_{n}}(u) = 0 \}.
\]

The important properties of $J_{\la, \epsilon_{n}}$ needed in the proof of Theorem \ref{T:3.1} are collected below.\\\\
Clearly $J_{\la, \epsilon_{n}}$ is a $ C^{2}$  even functional which maps bounded sets to bounded sets
 in terms of the norm $||.||_{\la}$. 
The gradient $J^{\prime}_{\la, \epsilon_{n}}$ is of the 
form $J_{\la, \epsilon_{n}}^{\prime}(u) = u - K_{\la, \epsilon_{n}}(u)$, 
 where $K_{\la, \epsilon_{n}} : H^{1}_{0}(\Omega) \rightarrow H^{1}_{0}(\Omega)$ is a continuous and compact operator. 
Now we are going to
study how the operator $K_{\la, \epsilon_{n}}$ behaves on $D(\delta)$. Let us prove the  following proposition.
\begin{prop}\label{P:3.2}
 For any $\rho_{0} > 0$ small enough, we have that $K_{\la, \epsilon_{n}}( D(\rho_{0})) \subset D(\rho) \subset D(\rho_{0})$ for
 some $\rho \in (0, \rho_{0}) $ for each $\la, n$ with $\la \in [0, \frac{(k-2)^2}{4} -4)$. Moreover, $D(\rho_{0}) 
\cap K^{\la}_{n} \subset P$.
\end{prop}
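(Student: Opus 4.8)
The plan is to show that the nonlinear operator $K_{\la,\epsilon_n}$ maps a small neighbourhood of the positive cone $P$ into a strictly smaller such neighbourhood, by comparing $K_{\la,\epsilon_n}(u)$ with its action on the positive part $u^+$ and on the negative part $u^-$. Recall that $w=K_{\la,\epsilon_n}(u)$ is characterized as the unique solution in $H^1_0(\Omega)$ of $\langle w,v\rangle_\la = \int_\Omega \frac{|u|^{2^*(t)-\epsilon_n-2}u\,v}{|y|^t}\,dx + \mu\int_\Omega uv\,dx$ for all $v$; here the extra $+\mu u$ term is harmless since $\mu<\mu_1(\la)$ makes $\langle\cdot,\cdot\rangle_\la-\mu\int\cdot$ still an equivalent inner product (by \eqref{equi1} and \eqref{LO}). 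First I would record that $\mathrm{dist}(u,P)=\|u^-\|_\la$ (the distance to the cone is attained at $u^+$), so it suffices to estimate $\|w^-\|_\la$ in terms of $\|u^-\|_\la$.

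The key computation is to test the equation for $w$ with $v=-w^-=\min(w,0)\le 0$. Using $\langle w,w^-\rangle_\la \le \|w^-\|_\la^2$ (in fact equality up to the sign convention, since the quadratic form splits on $\{w<0\}$), and on the right-hand side bounding $|u|^{2^*(t)-\epsilon_n-2}u$ on the set $\{w<0\}$: where $u\ge 0$ the integrand $|u|^{2^*(t)-\epsilon_n-2}u\,(-w^-)\le 0$ contributes nothing, so only the region where $u<0$ matters, giving
\[
\|w^-\|_\la^2 \le \int_\Omega \frac{|u^-|^{2^*(t)-\epsilon_n-1}\,w^-}{|y|^t}\,dx + \mu\int_\Omega u^- w^-\,dx
\le \big\||u^-|^{2^*(t)-\epsilon_n-1}\big\|_{*'}\,\|w^-\|_* + \mu\|u^-\|_2\,\|w^-\|_2,
\]
where I would apply Hölder with the weight $|y|^{-t}$ and the dual exponent of $2^*(t)-\epsilon_n$, then the embedding $\|\cdot\|_*\le C\|\cdot\|_\la$ together with the weighted embedding of Lemma \ref{LP} and the Hardy–Sobolev–Maz'ya inequality \eqref{E:2.2}, and for the $\mu$-term the compact embedding $H^1_0\hookrightarrow L^2$. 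This yields $\|w^-\|_\la \le C\big(\|u^-\|_\la^{2^*(t)-\epsilon_n-1} + \|u^-\|_\la\big)$. Since $2^*(t)-\epsilon_n-1>1$ for $\epsilon_n<\epsilon_0$ small, on $D(\rho_0)$ with $\rho_0$ small we have $\|u^-\|_\la<\rho_0$, hence $\|u^-\|_\la^{2^*(t)-\epsilon_n-1}\le \rho_0^{2^*(t)-\epsilon_n-2}\|u^-\|_\la$, and choosing $\rho_0$ small enough the constant in front of $\|u^-\|_\la$ can be made, say, $\le \tfrac12$. Setting $\rho:=\tfrac12\rho_0<\rho_0$ gives $\mathrm{dist}(K_{\la,\epsilon_n}(u),P)=\|w^-\|_\la \le \tfrac12\|u^-\|_\la < \rho$, i.e. $K_{\la,\epsilon_n}(D(\rho_0))\subset D(\rho)\subset D(\rho_0)$.

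For the last assertion, if $u\in D(\rho_0)\cap K^\la_n$ then $u=K_{\la,\epsilon_n}(u)$, so the inequality just proved reads $\|u^-\|_\la\le \tfrac12\|u^-\|_\la$, forcing $u^-=0$, i.e. $u\in P$. The main obstacle, and the only place real care is needed, is making the two weighted/unweighted Hölder–Sobolev estimates on the right-hand side uniform in $n$ (so that the constant $C$, and hence the threshold $\rho_0$, does not degenerate as $\epsilon_n\to 0$); this is where one invokes that $2^*(t)-\epsilon_n$ stays in a fixed compact subinterval of $(2,2^*(t)]$ for $\epsilon_n\in(0,\epsilon_0)$, so Lemma \ref{LP}, Lemma \ref{CP1} and \eqref{E:2.2} apply with $n$-independent constants. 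A secondary technical point is justifying $\mathrm{dist}(u,P)=\|u^-\|_\la$ and the splitting $\langle w,w^-\rangle_\la \ge \|w^-\|_\la^2$, both of which follow from the pointwise identities $\nabla w = \nabla w^+ - \nabla w^-$ with disjoint supports and $|y|^{-2}$ being a nonnegative weight.
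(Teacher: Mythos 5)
Your overall strategy --- testing the defining variational identity for $w=K_{\la,\epsilon_{n}}(u)$ against the negative part $w^{-}$, splitting the right-hand side into the $\mu$-term and the Hardy--Sobolev--Maz'ya term, and extracting a contraction on a neighbourhood of the cone --- is the same as the paper's (the paper merely packages the two contributions as $K_{\la,\epsilon_{n}}=L+W$ and estimates $\mathrm{dist}(L(u),P)$ and $\mathrm{dist}(W(u),P)$ separately). But one step fails as written: the identity $\mathrm{dist}(u,P)=\|u^{-}\|_{\la}$. For the Hilbert norm $\|\cdot\|_{\la}$ only the inequality $\mathrm{dist}(u,P)\le\|u^{-}\|_{\la}$ holds (because $u^{+}\in P$); the metric projection onto $P$ in $H^{1}_{0}$ is \emph{not} truncation, and in general $\mathrm{dist}(u,P)<\|u^{-}\|_{\la}$ strictly. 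You invoke the identity in the direction that is false when you pass from ``$u\in D(\rho_{0})$'' to ``$\|u^{-}\|_{\la}<\rho_{0}$'': membership in $D(\rho_{0})$ controls $\inf_{v\in P}\|u-v\|_{\la}$, not $\|u^{-}\|_{\la}$. The correct route --- the one the paper takes --- is to bound the right-hand side of your H\"older estimate by \emph{Lebesgue-type} norms of $u^{-}$ and then use the pointwise inequality $|u^{-}|\le|u-v|$, valid for every $v\in P$: this gives $|u^{-}|_{2^{*}(t)-\epsilon_{n},t,\Omega}\le\inf_{v\in P}|u-v|_{2^{*}(t)-\epsilon_{n},t,\Omega}\le C\,\mathrm{dist}(u,P)$ by Lemma \ref{LP} and \eqref{E:2.2}, and similarly $\|u^{-}\|_{L^{2}}\le\mu_{1}(\la)^{-1/2}\,\mathrm{dist}(u,P)$ by \eqref{LO}. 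With that substitution your estimate becomes $\|w^{-}\|_{\la}\le\frac{\mu}{\mu_{1}(\la)}\mathrm{dist}(u,P)+C\,\mathrm{dist}(u,P)^{2^{*}(t)-\epsilon_{n}-1}$, and since $\mathrm{dist}(w,P)\le\|w^{-}\|_{\la}$ (the good direction) the contraction follows.

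A second, smaller point: the contraction constant cannot be made $\le\tfrac12$ by shrinking $\rho_{0}$. The $\mu$-term contributes the \emph{fixed} factor $\alpha=\mu/\mu_{1}(\la)<1$, independent of $\rho_{0}$; only the coefficient of the nonlinear term decays as $\rho_{0}\to0$. The correct conclusion is $\mathrm{dist}(K_{\la,\epsilon_{n}}(u),P)\le\nu\,\mathrm{dist}(u,P)$ for some $\alpha<\nu<1$, which is exactly how the paper arranges it and which still suffices both for $K_{\la,\epsilon_{n}}(D(\rho_{0}))\subset D(\nu\rho_{0})$ and for the fixed-point argument $\mathrm{dist}(u,P)\le\nu\,\mathrm{dist}(u,P)\Rightarrow u\in P$. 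That last step of yours is in fact cleaner than the paper's detour through a lower bound on $\|J'_{\la,\epsilon_{n}}\|$ on $\partial D(\rho)$, and your remark on the uniformity in $n$ of the embedding constants is correct and worth making explicit.
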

\begin{proof}
 First note that $K_{\la, \epsilon_{n}}(u)$ can be decomposed as $K_{\la, \epsilon_{n}}(u) = L(u) + W(u)$ where $L(u), W(u)$
$\in H^{1}_{0}(\Omega)$ are the unique solutions of the equations
\[
 - \De(L(u)) =  \mu u \; \mbox{and} \; -\De(W(u)) = \frac{|u|^{2^*(t) - \epsilon_{n} - 2} u}{|y|^{t}} .
\]
In other words, $L(u)$ and $W(u)$ are uniquely determine by the relations
\begin{equation}\label{PI}
\langle Lu, v\rangle_{\la} =   \mu \int_{\Omega} u v \ dx , 
 \;  \langle  W(u), v  \rangle_{\la} = \int_{\Omega} 
 \frac{|u|^{2^*(t) - \epsilon_{n} - 2} u v}{|y|^{t}} dx. 
\end{equation}
{\bf Claim} If $u \in P,$ then $L(u),$ $G(u) \in P.$\\
  {\bf Proof of claim} Define, $L(u)^{-} = \mbox{max} \{ 0, -L(u)\}.$ If $u \in P,$ we have $ u \geq 0,$ and it follows that
  \begin{align*}
   \langle  L(u), L(u)^{-}  \rangle_{\la} & = \int_{\Omega} 
\left( \nabla L(u) \nabla L(u)^{-} - \la \frac{L(u)L(u)^-}{|y|^2} \right) dx \\
   & = - \int_{\Omega} \left(  |\nabla L(u)^-|^2 - \la \frac{|L(u)^-|^2}{|y|^2} \right) dx \\
   & = \mu \int_{\Omega} u L(u)^{-} dx \geq 0.\\
 \end{align*}
Hence, we have 
\[
 \int_{\Omega} |\nabla L(u)^-|^2  dx \leq \la \int_{\Omega} \frac{|L(u)^-|^2}{|y|^2} dx,
\]
then by Hardy's inequality \eqref{E:2.1}, we get $L(u)^- = 0$ a.e., which imply $L(u) \in P.$ Similarly, we have $G(u) \in P.$ This proves 
the claim.

 Using relation \eqref{PI} we have,  
\begin{eqnarray*}
 \langle Lu, Lu  \rangle_{\la}  && = \mu \int_{\Omega} u Lu \ dx \\
&&  \leq \mu \left(\int_{\Omega} |u|^2 dx \right)^{\frac{1}{2}} 
\left(\int_{\Omega}  |Lu|^2  dx \right)^\frac{1}{2}\\
&& \leq  \frac{\mu}{\mu_{1}(\la)} \left( \int_{\Omega} \left [|\nabla u|^2 - \la \frac{|u|^2}{|y|^2} \right ] dx \right)^{\frac{1}{2}} 
\left( \int_{\Omega}  \left[ |Lu|^2  -  \la \frac{ |Lu|^2}{|y|^2} \right] dx  \right)^{\frac{1}{2}} \\
&& \leq \frac{\mu}{\mu_{1}(\la)} ||Lu||_{\la}   ||u||_{\la}.
\end{eqnarray*}

 Now since $\mu <   \mu_{1}(\la)$, we get, 
\[ 
 ||Lu||_{\la} \leq \alpha ||u||_{\la},
\]
where $\alpha < 1$. Let $u \in H^{1}_{0}(\Omega)$
 and $v \in P$ be such that dist$(u,P) = ||u-v||_{\la}$ , then
\begin{equation}\label{PI1}
 \mbox{dist}(Lu, P) \leq ||Lu - Lv||_{\la} \leq \alpha ||u-v||_{\la} \leq \alpha \mbox{dist}(u,P).
\end{equation}
 Next we shall estimate the distance between $W(u)$ and $P$. Set $u^{-}:= \max \{0, -u\}$, $p_{n}(t) = 2^*(t) - \epsilon_{n}$. Then
\begin{eqnarray*}
 && \mbox{dist}(W(u),P) ||W(u)^{-}||_{\la} \leq ||W(u)^{-}||^{2}_{\la} \leq \langle W(u), W(u)^{-}\rangle_{\la} \\
&& = \int_{\Omega} \frac{|u|^{2*(t) - \epsilon_{n} -2} u W(u)^{-}}{|y|^t} dx \leq
 \int_{\Omega} \frac{|u^{-}|^{2^*(t) - \epsilon_{n} -1}
|W(u)^{-}|}{|y|^t} dx \\
 && = \int_{\Omega} \frac{|u^-|^{p_{n}(t) -1}}{|y|^{t\frac{p_{n}(t)-1}{p_{n}(t)}}} 
\frac{|W(u)^{-}|}{|y|^{\frac{t}{p_{n}(t)}}} dx \\
&& \leq \left( \int_{\Omega} \frac{|u^-|^{p_{n}(t)}}{|y|^{t}} dx \right)^{\frac{p_{n}(t)-1}{p_{n}(t)}}
\left( \int_{\Omega} \frac{|W(u)^-|^{p_{n}(t)}}{|y|^{t}} dx \right)^{\frac{1}{p_{n}(t)}},
\end{eqnarray*}
 the second term of the right hand side could be estimated by using Lemma \ref{LP} and Hardy-Sobolev-Maz'ya inequality 
\eqref{E:2.2}, hence we get,
\[
  \left( \int_{\Omega} \frac{|W(u)^-|^{p_{n}(t)}}{|y|^{t}} dx \right)^{\frac{1}{p_{n}(t)}} \leq  C ||W(u)^{-}||_{\la}.
\]
 Using Lemma \ref{LP} and inequality \eqref{E:2.2}, we obtain,  
\[
 \int_{\Omega} \frac{|u^{-}|^{p_{n}(t)}}{|y|^{t}} dx = \min\limits_{v \in P} \int_{\Omega} \frac{|u-v|^{p_{n}(t)}}{|y|^{t}} \leq
C \min \limits_{v \in P} ||u-v||_{\la}^{p_{n}(t)}.
\]
Thus we get,
\[
 \mbox{dist} (W(u), P) \leq C [\mbox{dist}(u, P)]^{p_{n}(t)-1} \quad \forall u \in H^{1}_{0}(\Omega).
\]
Choose $ \alpha < \nu < 1$, then there exists $\rho_{0}$ such that, if $\rho \leq
\rho_{0}$
 \begin{equation}\label{PIF}
 \mbox{dist}(W(u),P) \leq (\nu - \alpha ) \mbox{dist}(u,P) \quad \mbox{for
all}\quad  u\in D(\rho).
\end{equation}                                            
 Fix $\rho \leq \rho_{0}$. Inequalities \eqref{PI1} and \eqref{PIF} yield 
\begin{eqnarray*}
 \mbox{dist}(K_{\la, \epsilon_{n}}(u),P) && \leq \mbox{dist}(L(u),P) + \mbox{dist} (W(u),P)\\
&& \leq \nu \mbox{dist} (u,P)
\end{eqnarray*}
for all $u\in D(\rho)$, hence we have $K_{\la, \epsilon_{n}}( D(\rho_{0})) \subset D(\rho) \subset D(\rho_{0})$ for
 some $\rho \in (0, \rho_{0}).$ Also from this relation we deduce that, for any $\rho < \rho_{0}$ it holds, 
\begin{equation}
 u \in \partial D(\rho) \Longrightarrow ||J^{\prime}_{\lambda,\epsilon_{n}}(u)|| \geq C,
\end{equation}
where $C > 0$ is a positive constant. This in particular means that if $u \in D(\rho_{0}) \cap K_{n}^{\lambda}$ then 
indeed $u \in P.$
 This proves  the Proposition.                                                      
\end{proof}

Now we want to examine how the functional $J_{\la, \epsilon_{n}}$ behaves on each finite dimensional space $E_{l}$.

\begin{prop}\label{P:3.3}
 For each l, $\lim_{||u|| \rightarrow \infty, u \in E_{l}} J_{\la, \epsilon_{n}}(u) = - \infty$
\end{prop}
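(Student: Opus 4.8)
The plan is to exploit the fact that on the finite-dimensional subspace $E_l$ all norms are equivalent, so that the quadratic and the (negative) super-quadratic terms in $J_{\la,\epsilon_n}$ can be compared directly in a single norm. First I would write
\[
J_{\la,\epsilon_n}(u) = \frac12\left(\|u\|_\la^2 - \mu\int_\Omega u^2\,dx\right) - \frac{1}{p_n(t)}\int_\Omega \frac{|u|^{p_n(t)}}{|y|^t}\,dx,
\qquad p_n(t) = 2^*(t)-\epsilon_n,
\]
and bound the first bracket from above by $\frac12\|u\|_\la^2 \le C_1\|u\|^2$ using \eqref{equi1} (dropping the nonnegative $\mu$-term only makes the estimate cruder, which is harmless for an upper bound). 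The key point is that on $E_l$ the map $u \mapsto |u|_{p_n(t),t,\Omega} =: \|u\|_*$ is a norm: it is finite for every $u\in H^1_0(\Omega)$ by Lemma \ref{LP} together with the Hardy-Sobolev-Maz'ya inequality \eqref{E:2.2}, it is positively homogeneous, and it satisfies the triangle inequality since $p_n(t)\ge 1$; moreover it is nondegenerate because $\|u\|_*=0$ forces $u=0$ a.e.\ on the set $\{|y|>0\}$, hence a.e.\ on $\Omega$. Since $E_l$ is finite-dimensional, $\|\cdot\|_*$ and $\|\cdot\|$ are equivalent on $E_l$, so there is a constant $c_l>0$ with $\|u\|_* \ge c_l\|u\|$ for all $u\in E_l$.

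Combining these observations, for $u\in E_l$ I get
\[
J_{\la,\epsilon_n}(u) \le C_1\|u\|^2 - \frac{c_l^{\,p_n(t)}}{p_n(t)}\,\|u\|^{p_n(t)}.
\]
Since $\epsilon_0$ is fixed small enough, we may assume $p_n(t) = 2^*(t)-\epsilon_n > 2$ for all $n$ (here $2^*(t) = \tfrac{2(N-t)}{N-2} > 2$ because $t<2<N$, so this is just a constraint on the size of $\epsilon_0$). Therefore the negative term dominates as $\|u\|\to\infty$, and the right-hand side tends to $-\infty$; hence $J_{\la,\epsilon_n}(u)\to -\infty$ as $\|u\|\to\infty$ with $u\in E_l$, which is the claim.

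\textbf{Main obstacle.}
The only genuinely delicate point is justifying that $\|\cdot\|_*$ is a bona fide norm on $H^1_0(\Omega)$ (in particular that it is finite and nondegenerate in the presence of the singular weight $|y|^{-t}$) and hence, by finite-dimensionality, equivalent to $\|\cdot\|$ on $E_l$; once that is in place the estimate is routine. Finiteness is Lemma \ref{LP} plus \eqref{E:2.2}, and nondegeneracy follows because $\{y=0\}$ has measure zero in $\rn$. One should also keep track of the fact that the comparison constant $c_l$ depends on $l$ (it degenerates as $l\to\infty$), but this is irrelevant here since $l$ is fixed; the uniformity in $n$ is automatic because $p_n(t)$ ranges over the fixed interval $(2^*(t)-\epsilon_0,\,2^*(t)]\subset(2,\infty)$, on which the function $s\mapsto c_l^{\,s}/s$ is bounded below by a positive constant for $\|u\|$ large.
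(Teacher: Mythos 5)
Your proposal is correct and follows essentially the same route as the paper: both arguments rest on the equivalence of $\|\cdot\|_{*}$ with the $H^{1}_{0}$-norm on the finite-dimensional space $E_{l}$, which turns the upper bound for $J_{\la,\epsilon_{n}}$ into $C_{1}\|u\|^{2}-C_{2}\|u\|^{p_{n}(t)}$ with $p_{n}(t)>2$. Your extra care in checking that $\|\cdot\|_{*}$ is nondegenerate (and the remark on uniformity in $n$, which is not needed here since $n$ is fixed) only makes explicit what the paper takes for granted.
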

\begin{proof}
 For each $n$ , $||.||_{*}$ defines a norm on $H^{1}_{0}(\Omega)$, now since $E_{k}$ is  finite dimensional, there exists 
 a constant $C > 0$ such that $||u||_{\la} \leq C ||u||_{*}$ for all $u \in E_{k}$. Thus 
\begin{eqnarray*}
 J_{\la, \epsilon_{n}}(u) && \leq  \frac{1}{2} \int_{\Omega} |\nabla u|^{2} dx - \frac{\la}{2} \int_{\Omega} \frac{|u|^2}{|y|^2} dx - 
C \int_{\Omega} \frac{|u|^{2^{*}(t) -\epsilon_{n}}}{|y|^{t}} dx \\
&& \leq \frac{1}{2} ||u||_{\la}^{2} - C ||u||_{\la}^{2^{*}(t) -\epsilon_{n}}. 
\end{eqnarray*}
 Since $2^{*}(t) - \epsilon_{n} > 2$, we have $\lim_{||u|| \rightarrow \infty, u \in E_{k}} J_{\la, \epsilon_{n}}(u) = - \infty$.
\end{proof}
\begin{prop}\label{P:3.4}
 For any $\alpha_{1}, \alpha_{2} > 0$, there exists an $\alpha_{3}$ depending on $\alpha_{1}$ and $\alpha_{2}$ such that 
$||u|| \leq \alpha_{3}$ for all $u \in J_{\la, \epsilon_{n}}^{\alpha_{1}} \cap\{ u \in H^{1}_{0}(\Omega) :
||u||_{*} \leq \alpha_{2} \}$ where $J^{\alpha_{1}}_{\la, \epsilon_{n}} = \{u \in H^{1}_{0}(\Omega) : J_{\la, \epsilon_{n}}(u)
\leq \alpha_{1}\}$. 
\end{prop}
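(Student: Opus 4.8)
The plan is to argue by contradiction using the compactness of the embedding $H^1_0(\Omega) \hookrightarrow L^{q}_t(\Omega)$ for subcritical exponents (Lemma~\ref{CP1}), exactly as one handles a bounded Palais--Smale-type estimate. Suppose the conclusion fails. Then there is a sequence $\{u_m\} \subset J^{\alpha_1}_{\la,\epsilon_n}$ with $\|u_m\|_* \le \alpha_2$ for all $m$, but $\|u_m\|_\la \to \infty$ (using the equivalence of $\|\cdot\|$ and $\|\cdot\|_\la$ from \eqref{equi1}). The idea is that the term $\|u_m\|_*^{2^*(t)-\epsilon_n}$ appearing in $J_{\la,\epsilon_n}$ is uniformly bounded by $\alpha_2^{2^*(t)-\epsilon_n}$, so from $J_{\la,\epsilon_n}(u_m)\le\alpha_1$ we directly read off
\[
\tfrac12\|u_m\|_\la^2 - \tfrac{\mu}{2}\int_\Omega u_m^2\,dx \;=\; J_{\la,\epsilon_n}(u_m) + \tfrac{1}{2^*(t)-\epsilon_n}\|u_m\|_*^{2^*(t)-\epsilon_n} \;\le\; \alpha_1 + C\alpha_2^{2^*(t)-\epsilon_n}.
\]
Hence $\|u_m\|_\la^2 \le \mu\int_\Omega u_m^2\,dx + C(\alpha_1,\alpha_2)$, so the divergence of $\|u_m\|_\la$ must be driven entirely by the low-order $L^2$ term.

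Next I would rescale: set $v_m := u_m/\|u_m\|_\la$, so $\|v_m\|_\la = 1$. Dividing the last inequality by $\|u_m\|_\la^2$ gives $1 \le \mu\int_\Omega v_m^2\,dx + C(\alpha_1,\alpha_2)/\|u_m\|_\la^2$, and letting $m\to\infty$ yields $\liminf_m \int_\Omega v_m^2\,dx \ge 1/\mu > 0$. Passing to a subsequence, $v_m \rightharpoonup v$ weakly in $H^1_0(\Omega)$, and by Lemma~\ref{CP1} (with $q=2$, $t=0$, which is admissible since $2 < 2^*(t)$) we have $v_m \to v$ strongly in $L^2(\Omega)$; therefore $\int_\Omega v^2\,dx \ge 1/\mu$, so $v \neq 0$. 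On the other hand, $\|v_m\|_* = \|u_m\|_*/\|u_m\|_\la \le \alpha_2/\|u_m\|_\la \to 0$, and since $\|\cdot\|_*$ is weakly sequentially continuous on $H^1_0(\Omega)$ (the remark after the definition of $\|\cdot\|_*$, again a consequence of Lemma~\ref{CP1}), we get $\|v\|_* = \lim_m \|v_m\|_* = 0$. But $\|\cdot\|_*$ is a genuine norm on $H^1_0(\Omega)$, so $\|v\|_*=0$ forces $v=0$, contradicting $v\neq 0$.

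The main obstacle — really the only subtle point — is making sure the two convergences used are legitimate: strong $L^2$-convergence of $v_m$ (needed to keep the mass from escaping, i.e.\ to conclude $v\neq 0$) and the weak-to-strong continuity of $\|\cdot\|_*$ (needed to conclude $\|v\|_* = 0$). Both are supplied by Lemma~\ref{CP1} applied to subcritical exponents $q < 2^*(t)$: for the first, $2$ is subcritical since $2^*(t)>2$; for the second, $2^*(t)-\epsilon_n < 2^*(t)$ by construction of the perturbation. One should also note that $C(\alpha_1,\alpha_2)$ depends on $n$ through $\epsilon_n$, but since $\epsilon_n \in (0,\epsilon_0)$ with $\epsilon_0$ fixed, all the constants can be taken uniform in $n$ if desired — though the statement as phrased only requires the bound for each fixed $n$, so this uniformity is not strictly needed here.
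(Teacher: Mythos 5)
Your argument is correct, but it is genuinely different from the paper's. The paper proves the bound directly: since the standing assumption is $0<\mu<\mu_{1}(\la)$, the variational characterization \eqref{LO} gives $\mu\int_{\Omega}u^{2}\,dx\le \frac{\mu}{\mu_{1}(\la)}\|u\|_{\la}^{2}$, so from the identity
\[
J_{\la,\epsilon_{n}}(u)+\tfrac{1}{2^{*}(t)-\epsilon_{n}}\|u\|_{*}^{2^{*}(t)-\epsilon_{n}}
=\tfrac12\|u\|_{\la}^{2}-\tfrac{\mu}{2}\int_{\Omega}u^{2}\,dx
\ \ge\ \tfrac12\Bigl[\tfrac{\mu_{1}(\la)-\mu}{\mu_{1}(\la)}\Bigr]\|u\|_{\la}^{2}
\]
one reads off an explicit $\alpha_{3}$ in two lines, with a concrete coercivity constant. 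You instead run a contradiction-and-normalization argument: after the same identity, you rescale $v_{m}=u_{m}/\|u_{m}\|_{\la}$ and use two compactness facts (Rellich for the $L^{2}$ term, and the compact embedding into $L^{2^{*}(t)-\epsilon_{n}}_{t}(\Omega)$ behind the remark that $\|\cdot\|_{*}$ is weak-to-strong continuous) to force the weak limit to be simultaneously nonzero and zero. Both convergences you invoke are indeed covered by Lemma \ref{CP1}, so the proof is sound. What your route buys is that it never uses $\mu<\mu_{1}(\la)$, so it proves the proposition for every $\mu>0$; what it costs is constructiveness and brevity: you get no explicit $\alpha_{3}$, and you import compactness where the paper needs only the eigenvalue inequality. (A still simpler direct variant in your spirit: on the bounded $\Omega$ one has $\int_{\Omega}u^{2}\,dx\le C\,|u|_{2,t,\Omega}^{2}\le C'\|u\|_{*}^{2}\le C'\alpha_{2}^{2}$ by Lemma \ref{LP}, which bounds the low-order term directly without any limit argument.)
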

\begin{proof}
 Using Hardy's inequality \ref{E:2.1}, we have, 
\begin{eqnarray*}
 J_{\la, \epsilon_{n}}(u) + \frac{1}{2^*(t) - \epsilon_{n}} ||u||_{*}^{2^*(t)- \epsilon_{n}}&& = 
\frac{1}{2} \int_{\Omega} |\nabla u|^{2} dx -
\frac{\mu}{2} \int_{\Omega} u^{2} dx - \frac{\la}{2} \int_{\Omega} \frac{|u|^2}{|y|^{2}} dx \\
&& = \frac{1}{2}||u||_{\la}^2 - \frac{\mu}{2} \int_{\Omega} |u|^2 dx \\
&&  \geq  \frac{1}{2} \left[||u||_{\la}^{2} - \frac{\mu}{\mu_{1}(\la)} ||u||_{\la}^{2}    \right].
\end{eqnarray*}
Thus we have
$J_{\la, \epsilon_{n}}(u) + \frac{1}{2^*(t) - \epsilon_{n}} ||u||_{*}^{2^*(t)- \epsilon_{n}} \geq 
\frac{1}{2} \left[ \frac{\mu_{1}(\la) - \mu}{ \mu_{1}(\la)}\right]  ||u||_{\la}^2$. \\
Hence the proposition follows.
\end{proof}
Now we can prove Theorem \ref{T:3.1}.\\
{\bf Proof of Theorem \ref{T:3.1}} The propositions \ref{P:3.2}, \ref{P:3.3} and \ref{P:3.4} tell us that $J_{\la, \epsilon_{n}}$
satisfies all the conditions of Theorem 2 in \cite{SZ}. Thus $J_{\la, \epsilon_{n}}$ has a sign changing critical point 
$u^{n}_{l} \in H^{1}_{0}(\Omega)$ at  a level $C(n, \la, l)$ where  $C(n,\la, l) \leq \mbox{sup}_{E_{l+1}} J_{\la, \epsilon_{n}}$ 
and the augmented Morse index $m^*(u^n_{l})$ of $u^{n}_{l}$ is greater than or equal to  $l$. The only 
things remains to show is that the 
sequence $\{u^n_{l}\}_{n=1}^{\infty}$ is bounded for each $l$. \\
{\bf Claim} There exists a constant $T_{1} > 0$ independent of $l$ and $n$ such that 
\[
 \sup_{E_{l+1}} J_{\la, \epsilon_{n}}(u) \leq T_{1}\mu_{l+1}(\la)^{\frac{2^*(t) - \epsilon_{0}}{2(2^*(t)- \epsilon_{0}-2)}}.
\]
{\bf Proof of Claim} The definition of $E_{l +1}$ implies that $||u||_{\la}^{2} \leq \la_{l+1}||u||^2_{L^2} \leq C \la_{l+1}
|u|^{2}_{p,t,\Omega}$. Note we have $|u|_{2^*(t) - \epsilon_{0},\Omega} \leq D_{1} |u|_{2^*(t) - \epsilon_{n},\Omega}$,
where $D_{1} > 0$ is a constant, independent of $n$ and $k$. Thus we have 
\begin{eqnarray*}
 J_{\la, \epsilon_{n}}(u)&&  \leq \frac{1}{2} \int_{\Omega} |\nabla u|^{2} dx - \frac{\la}{2} \int_{\Omega} \frac{|u|^2}{|y|^2} dx -
\frac{1}{2^*(t) - \epsilon_{n}} \int_{\Omega} \frac{|u|^{2^*(t)- \epsilon_{n}}}{|y|^t} dx \\
 && =  \frac{1}{2}||u||_{\la}^{2} - \frac{1}{2^*(t) - \epsilon_{n}} \int_{\Omega} \frac{|u|^{2^*(t)- \epsilon_{n}}}{|y|^t} dx.
\end{eqnarray*}
 Now using the inequality 
\[
 |u|^{2^*(t) - \epsilon_{0}} \leq c_{1} |u|^{2^*(t) - \epsilon_{n}} + c_{2},
\]
and the fact $t < 2$, we get, 
\[
 J_{\la, \epsilon_{n}}(u) \leq \frac{1}{2} ||u||_{\la}^{2} - D_{2}
 \int_{\Omega} \frac{|u|^{2^*(t)- \epsilon_{0}}}{|y|^t} dx + D_{3},
\]
where $D_{2} > 0$, $D_{3} > 0$ are constants independent of $n$ and $l$. Also there exists a constant $D_{4} >0$ such that 
$|u|_{2,t,\Omega} \leq D_{4} |u|_{2^*(t) -\epsilon_{0}, t, \Omega}$, therefore we may have $D_{5}$ such that 
$||u||^{2^*(t) - \epsilon_{0}}_{\la} \leq D_{5} \la_{l+1}^{({2^*(t)- \epsilon_{0})}/2} 
|u|_{2^*(t)-\epsilon_{0}, t, \Omega}^{2^*(t)- \epsilon_{0}}$ for all $u \in E_{l+1}$. Then
\begin{eqnarray*}
 J_{\la, \epsilon_{n}}(u)  && \leq \frac{1}{2} ||u||^{2}_{\la} - D_{6} \mu_{l+1}(\la){^{-(2^*(t)-\epsilon_{0})/2} ||u||_{\la}^{2^*(t) - \epsilon_{0}}} 
+ D_{3}\\
&& \leq D_{7} \mu_{l+1}(\la)^{{\frac{2^*(t)- \epsilon_{0}}{2(2^*(t)-\epsilon_{0} -2)}}} + D_{3}\\
&& \leq T_{1} \mu_{l+1}(\la)^{{\frac{2^*(t)- \epsilon_{0}}{2(2^*(t)-\epsilon_{0} -2)}}},
\end{eqnarray*}
where $D_{i}(i= 1, \ldots,7)$ and $T_{1}$ are positive constants independent of $l$ and $n$.\\
Also we know that energy of any critical point of $J_{\la, \epsilon_{n}}$ is 
nonnegative. Thus $J_{\la, \epsilon_{n}}(u^{n}_{l}) \\
\in [0, T_{1}\mu_{l+1}(\la){^{\frac{2^*(t)- 
\epsilon_{0}}{2(2^*(t) - \epsilon_{0} -2)}}]}$. This subsequently implies that the sequence $\{u^n_{l}\}_{n=1}^{\infty}$ is 
bounded in $H^{1}_{0}(\Omega)$ for each $l$. This proves the Theorem.  
                           
 \section{Proof of the Main Theorems}                         
 
 In this section we will prove the existence of infinitely many solutions for the Eq. \eqref{E:1.1}. First we recall the following
  compactness results by C.Wang and J.Wang (see \cite{WW}, Theorem 1.3 for a proof):
 
\begin{lem}\label{COMPACT}
 Let $\mu > 0$, $N > 6 + t$, $\la \in [0, \frac{(k-2)^2}{4} -4)$ and $u_{n}$ is a  solution of \eqref{E:2.3} for each $\epsilon_{n}$.
 Suppose $\epsilon_{n} \rightarrow 0$ and  $||u_{n}|| \leq C$ for 
some constant independent of $n$, then $u_{n}$ has a subsequence, 
 which converges strongly in $H^{1}_{0}(\Omega)$ as $n \rightarrow + \infty$. 
\end{lem}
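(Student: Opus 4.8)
Since the statement is quoted from \cite{WW} (Theorem 1.3), the proof I would follow is the Struwe‑type concentration–compactness analysis combined with the Devillanova–Solimini device for excluding bubbles, the restriction $N>6+t$ being precisely what makes the latter work. First I would use the uniform bound $\|u_n\|\le C$ to pass to a subsequence with $u_n\rightharpoonup u_0$ weakly in $H^{1}_{0}(\Omega)$, strongly in $L^{q}_{t}(\Omega)$ for every $q<2^{*}(t)$ by Lemma \ref{CP1}, and a.e.\ in $\Omega$. Passing to the limit in the weak formulation of \eqref{E:2.3}, with $\epsilon_n\to0$ and Lemma \ref{LP} controlling the critical term, shows that $u_0$ is a weak solution of \eqref{E:1.1} in the sense of \eqref{WK}. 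Writing $v_n:=u_n-u_0$, the task reduces to proving $\|v_n\|\to0$.

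Next I would establish a bubbling decomposition for $v_n$: up to a subsequence there are an integer $m\ge0$, points $x_n^j\in\Omega$ and scales $R_n^j\to+\infty$ with
\[
 v_n=\sum_{j=1}^{m}(R_n^j)^{\frac{N-2}{2}}W^j\!\big(R_n^j(\cdot-x_n^j)\big)+o(1)\quad\text{in }H^{1}_{0}(\Omega),
\]
where each $W^j\not\equiv0$ is a finite‑energy entire solution of the associated limiting problem. When $t>0$ concentration can occur only at points of the singular set $\{0\}\times\mathbb{R}^{N-k}$ — away from it \eqref{E:2.3} is subcritical, so by Rellich's theorem the nonlinearity converges strongly there — and the limiting problem is the Hardy–Sobolev–Maz'ya equation $-\De W-\la|y|^{-2}W=|W|^{2^{*}(t)-2}W\,|y|^{-t}$ on $\rn$, whose cylindrical symmetry, regularity and decay are known (\cite{MG}, \cite{SMF}); when $t=0$ one is in the Brezis–Nirenberg situation and interior bubbles off the axis are the usual Aubin–Talenti profiles (boundary bubbles being excluded since the half‑space Dirichlet problem has only the trivial solution). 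Energy additivity, $\lim_n J_{\la,\epsilon_n}(u_n)=J_{\la}(u_0)+\sum_j E(W^j)$, together with the uniform energy bound forces $m<\infty$, and $\|v_n\|\to0$ is equivalent to $m=0$; so I would argue by contradiction, assuming $m\ge1$.

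The decisive and hardest step is to eliminate the bubbles. Selecting the bubble with the slowest concentration rate and using the sharp pointwise decay of $W^j$ (the weights $|y|^{-2}$ and $|y|^{-t}$ making the decay rate depend on $k,N,\la,t$), I would apply a local Pohozaev identity for \eqref{E:2.3} on a small fixed ball centred at the concentration point: the critical term balances the gradient boundary terms to leading order, while the linear term $\mu\int u_n^2$ and the remainders coming from $u_0$ and from the subcritical defect $\epsilon_n$ enter at a lower order in $(R_n^j)^{-1}$ which, precisely when $N>6+t$, has the wrong magnitude to be absorbed — a contradiction. This is the cylindrical form, carried out in \cite{WW}, of the argument of \cite{SD} and \cite{CY}. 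Hence $m=0$, and $v_n\to0$ strongly in $H^{1}_{0}(\Omega)$.

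The main obstacle is this last step: obtaining the precise decay and energy estimates near a concentration point in the presence of the two singular weights, and then pushing the Pohozaev computation far enough to see exactly where $N>6+t$ enters; for the complete details I refer to \cite{WW}.
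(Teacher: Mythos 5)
Your proposal is consistent with the paper, which does not prove this lemma at all but simply quotes it as Theorem 1.3 of \cite{WW}; your outline of the Devillanova--Solimini/Cao--Yan blow-up-plus-local-Pohozaev strategy is an accurate description of the cited argument, and like the paper you ultimately defer to \cite{WW} for the details. No gap, and no genuinely different route.
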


 {\bf Proof of Theorem \ref{main}}: The proof is divided into two steps.
 
 Step-1: By combining  Lemma \ref{COMPACT} and Theorem \ref{T:3.1}, we get a sequence $\{u_{l}\}_{l=1}^{\infty}$ of solutions of the 
 problem  \eqref{E:1.1}  with energy $C(\la, l) \in [0, T_{1} 
\mu_{l+1}(\la)^{{\frac{2^*(t) -\epsilon_{0}}{2(2^*(t)-\epsilon_{0} -2)}}}]$. Moreover, we claim that $u_{l}$ is still sign changing.
Since $\{u^n_{l}\}_{n=1}^{\infty}$ is a sign changing solutions to \eqref{E:2.3} , let
\[
 (u^n_{l})^{\pm} := \mbox{max} \{\pm u^n_{l}, 0 \}.
\]
Then we have 
\[
 \int_{\Omega} |\nabla (u^n_{l})^\pm|^2 dx = \la \int_{\Omega} \frac{|(u^n_{l})^\pm|^2}{|y|^2} dx 
+ \mu \int_{\Omega} |(u^n_{l})^\pm|^2 dx 
+ \int_{\Omega}\frac{|(u^n_{l})^\pm|^{2^*(t) -\epsilon_{n}}}{|y|^{t}} dx.
\]
Thus
\[
 ||(u^n_{l})^\pm||_{\la}^2 \leq \alpha ||(u^n_{l})^\pm||_{\la}^2 + \int_{\Omega}\frac{|(u^n_{l})^\pm|^{2^*(t) -\epsilon_{n}}}{|y|^{t}},
\]
where $\alpha < 1$. Then using Hardy-Sobolev-Maz'ya inequality \eqref{E:2.2} it follows that 
\[
 ||(u^n_{l})^\pm||_{\la} \geq C_{0} > 0.
\]
where $C_{0}$ is a constant independent of $n$. This implies that the limit $u_{l}$ of the subsequence $\{u^n_{l}\}$ is still 
sign-changing. 

Step-2 :  Now it remains to show that infinitely many $u_{l}'s$ are different. This follows if we show that energy of $u_{l}$ 
goes to infinity as $l \rightarrow \infty$. \\
 Suppose not,
then $\lim_{l\rightarrow \infty}C(\la,l) = c'< \infty$.
For any $l\in \mathbb{N}$ we may find an $n_{l}$(assume $n_{l} > l$) such that
$|C(n_{l},\la,l) - C(\la,l)| < \frac{1}{l}$. It follows
 that $\lim_{l\rightarrow \infty} C(n_{l},\la, l) = \lim_{l\rightarrow \infty}
C(\la,l) = c'<\infty$. Hence, 
 $\{u^{n_{l}}_l\}_{l\in \mathbb{N}}$ is bounded in $H^{1}_{0}(\Omega)$ and hence satisfies the uniform bound
given by Lemma \ref{COMPACT}. Therefore the augmented Morse index of $u^{n_l}_l$ remains bounded which contradicts the fact that 
the augmented Morse index of $u^{n_l}_l$ is greater than or equal to $l$. Thus $\lim_{l \rightarrow \infty} C(\la,l)
= \infty$ and hence infinitely many $u_{l}'s$ are different. This finishes the proof of Theorem \ref{main}.\\

{\bf Proof of Theorem \ref{non}}
  The proof  is based on the 
 Pohozaev identity. The difficulty in applying this identity is because of the presence singular terms. We can overcome this difficulty
 by using the partial $H^2$- regularity.
 With an obvious modification from ( \cite{SB}, Theorem 2.4), we can prove if $u$ is a solution 
 to the Eq. \eqref{E:1.1}, then $u_{z_{i}} \in H^{1}(\Omega)$ for all $1 \leq i \leq N-k$.\\
   To make the test function smooth we introduce cut-off functions and pass to the limit with the 
  help of the above regularity result. 
We will assume without loss of generality that $\Omega$ is star shaped with respect to the origin.
  
  For $\epsilon > 0$ and $R > 0$, define $\varphi_{\epsilon, R} = \varphi_{\epsilon}(x) \psi_{R}(x)$ where $\varphi_{\epsilon}(x) = 
  \phi(|y|/\epsilon)$, $\psi_{R}  = \psi(|x|/R)$, $\varphi$ and $\psi$ are smooth functions in $\mathbb{R}$ with the properties 
  $0 \leq \varphi, \psi \leq 1$, with supports of $\varphi$ and $\psi$ in $(1, \infty)$ and $(-\infty, 2)$ respectively and 
  $\varphi(t) = 1$ for $t \geq 2$, and $\psi(t) = 1$ for $t \leq 1$.

  Assume that \eqref{E:1.1} has a nontrivial solution $u$. Then $u$ is smooth away from the singular set and hence 
  $(x. \nabla u)\varphi_{\epsilon, R} \in \mbox{C}^{2}_{c} (\Omega)$. Multiplying Eq.\eqref{E:1.1} by this test function 
  and integrating by parts, we have 
  \begin{eqnarray}\label{NON}
  && \int_{\Omega} \nabla u. \nabla ((x. \nabla u) \varphi_{\epsilon, R}) \ dx - \la \int_{\Omega} 
  \frac{u (x. \nabla u)\varphi_{\epsilon,R}}{|y|^2} dx  - \int_{\Omega} \frac{\partial u}{\partial \nu} 
  (x . \nabla u)\varphi_{\epsilon , R} \ dx \notag \\
   &&  = \int_{\Omega} \frac{|u|^{2^{*}(t) -2} u}{|y|^t} (x. \nabla u) \varphi_{\epsilon, R} \ dx + 
  \mu \int_{\Omega} u (x. \nabla u)\varphi_{\epsilon , R} \ dx.
  \end{eqnarray}
Now, RHS of \eqref{NON} can be simplified as 
\begin{eqnarray*}
 && \int_{\Omega} \frac{|u|^{2^{*}(t) -2} u}{|y|^t} (x. \nabla u) \varphi_{\epsilon, R} \ dx + 
  \mu \int_{\Omega} u (x. \nabla u)\varphi_{\epsilon , R} \ dx \notag \\
  && = \frac{1}{2^{*}(t)} \int_{\Omega} (\nabla |u|^{2^{*}(t)}. x)\frac{\varphi_{\epsilon, R}}{|y|^t} dx  + \frac{\mu}{2} dx
  \int_{\Omega} (\nabla |u|^2. x) \varphi_{\epsilon, R} \ dx\\
  && = -\frac{(N-2)}{2} \int_{\Omega}\frac{|u|^{2^{*}(t)}}{|y|^t} \varphi_{\epsilon, R} \ dx - 
  \frac{1}{2^*(t)} \int_{\Omega} \frac{|u|^{2^*(t)}}{|y|^t} [x . (\psi_{R} \nabla \varphi_{\epsilon} dx + 
  \varphi_{\epsilon} \nabla \psi_{R})] \ dx\\
  && - \frac{N\mu}{2} \int_{\Omega} |u|^2 \varphi_{\epsilon, R} \ dx - \frac{\mu}{2} 
  \int_{\Omega} |u|^2 [x . (\psi_{R} \nabla \varphi_{\epsilon}  dx + \varphi_{\epsilon} \nabla \psi_{R})] \ dx.
  \end{eqnarray*}
Note that $|x . (\psi_{R} \nabla \varphi_{\epsilon} + \varphi_{\epsilon} \nabla \psi_{R})| \leq C$ and hence using the dominated 
convergence theorem we get
\begin{equation}\label{D}
 \lim_{R \rightarrow \infty} [\lim_{\epsilon \rightarrow 0} RHS] = - \frac{(N-2)}{2} \int_{\Omega} \frac{|u|^{2^*(t)}}{|y|^t} dx
 - \frac{N\mu}{2} \int_{\Omega} |u|^2 \ dx.
\end{equation}
For LHS, using integration by parts and the fact that $u_{z_{i}} \in H^{1}(\Omega)$, (see \cite{SB}, Theorem 4.1 for detail), we get
\begin{equation}\label{E}
 \lim_{R \rightarrow \infty} [\lim_{\epsilon \rightarrow 0} LHS]  = - \frac{(N-2)}{2} 
 \left[\int_{\Omega} \left( |\nabla u|^2 - \la \frac{u^2}{|y|^2} \right) dx \right] 
 - \frac{1}{2} \int_{\partial \Omega} \left(\frac{\partial u}{\partial \nu}   \right)^2 (x. \nu) \  dx.
\end{equation}
Substituting \eqref{D} and \eqref{E} in \eqref{NON}, and using Eq. \eqref{E:1.1}, we get 
\[
 \int_{\partial \Omega}\left(\frac{\partial u}{\partial \nu}\right)^2 (x. \nu)\  dx  + 2 |\mu| \int_{\Omega} u^2 \ dx = 0
\]
which implies $u =0$ in $\Omega$ by the principle of unique continuation. This proves the theorem.

\end{document}